\definecolor{navy}{rgb}{0,0,0.502}
\newtheorem{lemma}{Lemma}[section]
\newtheorem{theorem}[lemma]{Theorem}
\newtheorem{prop}[lemma]{Proposition}
\newtheorem{exam}[lemma]{\normalfont \scshape
 Example}
\newtheorem{rem}[lemma]{\normalfont \scshape Remark}
\newenvironment{example}{\begin{exam}}{\end{exam}}
\DeclarePairedDelimiter\floor{\lfloor}{\rfloor}
\newcommand{\R}{\mathbb{R}}
\newcommand{\N}{\mathbb{N}}
\newcommand{\D}{\mathbb{D}}
\newcommand{\abs}[1]{\left\vert#1\right\vert}
\newcommand{\set}[1]{\left\{#1\right\}}
\newcommand{\bfx}{\boldsymbol{x}}
\newcommand{\bfzero}{\boldsymbol{0}}
\newcommand{\bfinfty}{\boldsymbol{\infty}}
\newcommand{\bfone}{\boldsymbol{1}}
\newcommand{\bfa}{\boldsymbol{a}}
\newcommand{\bfb}{\boldsymbol{b}}
\newcommand{\bfA}{\boldsymbol{A}}
\newcommand{\bfD}{\boldsymbol{D}}
\newcommand{\bfM}{\boldsymbol{M}}
\newcommand{\bfI}{\boldsymbol{I}}
\newcommand{\bfU}{\boldsymbol{U}}
\newcommand{\bfu}{\boldsymbol{u}}
\newcommand{\bfB}{\boldsymbol{B}}
\newcommand{\bfV}{\boldsymbol{V}}
\newcommand{\bfX}{{\boldsymbol{X}}}
\newcommand{\bfZ}{\boldsymbol{Z}}
\newcommand{\bfz}{\boldsymbol{z}}
\newcommand{\bfLambda}{\boldsymbol{\Lambda}}
\newcommand{\bfSigma}{\boldsymbol{\Sigma}}
\newcommand{\bfOmega}{\boldsymbol{\Omega}}
\newcommand{\bfGamma}{\boldsymbol{\Gamma}}
\newcommand{\bfDelta}{\boldsymbol{\Delta}}
\newcommand{\bfmu}{\boldsymbol{\mu}}
\newcommand{\bfxi}{\boldsymbol{\xi}}
\newcommand{\bfvarrho}{\boldsymbol{\varrho}}
\newcommand{\bfsigma}{\boldsymbol{\sigma}}
\newcommand{\calI}{\mathcal I}
\newcommand{\bcalI}{{\mathcal I}^\complement}
\newcommand{\calJ}{\mathcal{J}}
\newcommand{\calD}{\mathcal D}
\DeclareMathOperator{\cov}{Cov}
\DeclareMathOperator{\cor}{Cor}
\DeclareMathOperator{\expect}{E}
\DeclareMathOperator{\Pro}{Pr}
\def\indic{\mathds{1}}
\def\bzero{{\bf 0}}
\def\diff{\mbox{d}}
\newcommand{\isrec}{R}
\newcommand{\iscrec}{R^{\text{CR}}}
\def\mins{{\scriptscriptstyle{\setminus}}}
\begin{document}

\title{Records for Some Stationary Dependent Sequences}%
\author{M. Falk, A. Khorrami and S. A. Padoan}

\maketitle

\begin{abstract}
For a zero-mean, unit-variance second-order stationary univariate Gaussian process we derive the probability that a record at the time $n$, say $X_n$, takes place and derive its distribution function.
We study the joint distribution of the arrival time process of records and the distribution of the increments between the first and second record, and the third and second record and we compute the expected number of records. We also consider two consecutive and non-consecutive records, one at time $j$ and
one at time $n$ and we derive the probability that the
joint records $(X_j,X_n)$ occur as well as their distribution function.
The probability that the records $X_n$ and $(X_j,X_n)$ take place and
the arrival time of the $n$-th record, are independent of the marginal distribution function, provided that it is continuous. These results actually hold for a second-order stationary process with Gaussian copulas. We extend some of these results to the case of a multivariate Gaussian process.
Finally, for a strictly stationary process satisfying some mild conditions on
the tail behavior of the common marginal distribution function $F$ and the long-range dependence of the extremes of the process, we derive the asymptotic probability that the record $X_n$ occurs and derive its distribution function.

\noindent Keywords: Arrival time, closed skew-normal distribution, Gaussian process, generalized extreme-value distribution, record, strictly stationary process.
\end{abstract}

\section{Introduction}

Let $\{X_n, n\geq 1\}$ be a sequence of identically distributed random variables (rvs), and denote by
$F$ the common univariate marginal distribution function.
For any $i,j\in\N$,
set $M_{i:j}:=\max(X_i,\ldots,X_j)$.
For simplicity, we set $M_j :=M_{1:j}$, that is $M_j:=\max(X_1,\ldots,X_j)$.
The rv $X_n$ is a record if $X_n>M_{n-1}$. Such an event is coded by the indicator function
$\isrec_n:=\indic(X_n \text{ is a record})$.
When $X_1,X_2,\ldots$ are independent,
many results on records are already known (e.g., \citeNP{gal87}; \citeNP{arnbn98}; \citeNP[Ch. 4]{resn08}; \citeNP{barakat2017}; \shortciteNP{falkkp2018}). In the multivariate case various definitions of records are possible and have been investigated both in the past and more recently, see e.g., \citeN{golres89}, \citeN{hashhue05}, \citeN{hwang2010}, \shortciteN{domfalzot18} to name a few. In this work we consider \emph{complete records}; these are random vectors which are univariate records in each component.
Precisely, let $\{\bfX_n,n\geq 1\}$ be a strictly stationary sequence of $d$-dimensional random vectors (rvs) $\bfX_n=(X_n^{(1)},\ldots,X_n^{(d)})\in\R^d$. Let $F$ be the common joint distribution function
of $\bfX_n$ with margins $F_i$, $1\leq i\leq d$. The rv $\bfX_n$ is a complete record if
$$
\bfX_n> \max_{1\leq i\leq n-1} \bfX_i,
$$
where the maximum is computed componentwise.
We denote the rv coding the occurrence of a complete record at time $n$ by
$\iscrec_n:=\indic(\bfX_n \text{ is a complete record})$.

Except for \citeN{haiman1987}, \shortciteN{haiman1998}, as far as we know, most of the available results on records concern sequences of independent random variables or vectors.
In the present work we derive
some new results on the records of a stationary sequence of dependent random variables and dependent random vectors, under appropriate conditions of the dependence structure.

At first we consider a univariate second-order stationary 
Gaussian process with zero-mean, unit-variance.  
This means that for every $n=1,2,\ldots$, $\expect(X_n)=0$, $\expect^2(X_n)=1$ and the autocovariance of the process is translation-invariant depending only on the time difference, i.e.
for every $i,j$, $\rho_{i,j}=\expect(X_iX_j)=\expect(X_0X_{j-i})=\rho_{0,j-i}\equiv \rho_{j-i}$, where
$\rho_{j-i}$ is a function only of the separation $j-i$ and for every $m$,
$\rho_{i+m,j+m}=\rho_{j-i}$.
We derive the probability that a record at time $n$, say $X_n$, takes place, and the distribution of $X_ n$, being a record. Furthermore, we derive the joint distribution of the arrival time process of records and more specifically the distribution of the increments between the first and second record and the third and second record.
We compute the expected number of records which, depending on the type of correlation structure of the Gaussian process, can be finite or infinite. We also focus on joint records and we derive  the probability that two consecutive and non-consecutive records at the time $j$ and $n$, say $X_j$ and $X_n$, take place, as well as the joint distribution of $(X_j, X_ n)$, considering they are both records.

We highlight that many of our findings, such as the probability that the records $X_n$ and $(X_j,X_n)$ take place and
the arrival time of the $n$-th record, are independent of the marginal distribution function $F$, provided that is is continuous. As a consequence, the results actually hold for second-order stationary sequences with \emph{Gaussian copulas}. On the contrary the  distribution of a record (two records), conditional to the assumption that it is a record (they are records), however does depend on $F$.

Next we consider a strictly stationary process satisfying some mild conditions on
the tail behavior of the common marginal distribution function $F$ and the long-range dependence of the extremes of the process. More specifically, it is assumed that $F$ is attracted by the so-called Generalized Extreme-Value family of distributions, and that maxima on separated enough intervals within the time span $n$ are approximately independent. Within this setting we derive the probability that $X_n$ is a record, the distribution of $X_ n$ (being a record), and the expected number of records.

We complete the work by considering a zero-mean, unit-variance multivariate second-order stationary Gaussian process. We derive the probability that a complete record at time $n$ occurs, and we compute the distribution of $\bfX_ n$ (being a record), as well as the probability that
two complete records at the time $j$ and $n$ occur, and the joint distribution of
$(\bfX_j,\bfX_ n)$ (being records).

The paper is organized as follows. In Section \ref{sec:prelim} we introduce some notation used throughout the paper and we briefly review some basic concepts on the multivariate closed skew-normal distribution.
In Section \ref{sec:uni_gauss_seq} we present our main results on records for an univariate second-order stationary Gaussian process. In Section \ref{sec:asymptotic} we provide the asymptotic probability
and distribution function of a record at time $n$ for a strictly stationary process that satisfies some
appropriate conditions. Finally, in Section \ref{sec:multivariate} we extend some of the results
derived in Section \ref{sec:asymptotic} to the case of multivariate second-order stationary Gaussian processes.

\section{Univariate Case}
\subsection{Preliminary results and notation}\label{sec:prelim}

Throughout the paper we use the following notation. The symbol $\bfX\sim N_n(\bfmu,\bfSigma)$, $n\in\N$, means an $n$-dimensional random vector that follows a multivariate Gaussian distribution with mean $\bfmu\in\R^n$ and positive-definite covariance matrix $\bfSigma=\bfsigma\bar{\bfSigma}\bfsigma\in\R^{n,n}, \bfsigma:=\text{diag}(\sigma_{11},\dots,\sigma_{nn})$, and
  $\bar{\bfSigma}$ is the correlation matrix. Its cumulative distribution function (cdf) and probability density function
(pdf) are denoted by $\Phi_n(\bfx;\bfmu,\bfSigma)$ and $\phi_n(\bfx;\bfmu,\bfSigma)$ with
$\bfx\in\R^n$. When $\bfmu=\bzero=(0,\ldots,0)^\top$ and $\bfSigma=\bfI$, where $\bfI$ is the identity matrix, we write $\Phi_n(\bfx)$ for simplicity.

We indicate with $\mathbf{1}_{a,b}$  ($\bfzero_{a,b}$) a matrix of dimension $a\times b$ whose elements are all equal to one (zero). We omit the subscripts when the  dimensions of the matrices are clear from the context.
%

We introduce the notion of a multivariate {\it closed skew-normal} (CSN) random vector 
and we do so by using the so-called conditioning representation (\citeNP[Ch. 2]{genton2004}).
Let $\bfU\sim N_m(\bfxi,\bfOmega)$ being independent of $\bfV\sim N_n(\bzero,\bfSigma)$, 
where $\bfxi\in\R^m$, $\bfOmega\in\R^m\times \R^m$ and $\bfSigma\in \R^n\times \R^n$.
Let $\bfDelta\in\R^n\times\R^m$, then
$$
\begin{pmatrix}
\bfU\\
\bfDelta\bfU+\bfV
\end{pmatrix}
\sim
N_{m+n}
\left(
\begin{pmatrix}
\bfxi\\
\bzero
\end{pmatrix}
,
\begin{pmatrix}
\bfOmega & \bfOmega\bfDelta^\top\\
\bfDelta\bfOmega & \bfGamma
\end{pmatrix}
\right),
$$
where $\bfGamma = \bfSigma + \bfDelta\bfOmega\bfDelta^\top$.
Define $\bfX$ equal to $\bfU$, under the condition that $\bfDelta\bfU+\bfV > \bfmu$, denoted by $\bfX=(\bfU|\bfDelta\bfU+\bfV > \bfmu)$, where $\bfmu\in\R^n$. The $m$-dimensional random vector $\bfX$ follows a multivariate closed skew-normal distribution, in symbols $\bfX\sim CSN_{m,n}(\bfxi,\bfOmega,\bfDelta,\bfmu,\bfSigma)$, whose pdf is, for all $\bfx\in\R^m$,
\begin{equation}\label{eq: CSN density}
\psi_{m,n}(\bfx;\bfxi,\bfOmega,\bfDelta,\bfmu,\bfSigma)=\frac{\phi_m(\bfx-\bfxi;\bfOmega)
\Phi_n(\bfDelta(\bfx-\bfxi);\bfmu,\bfSigma)}{\Phi_n(\bzero;\bfmu,\bfGamma)}.
\end{equation}
We denote the cdf of $\bfX$ by $\Psi_{m,n}(\bfx;\bfxi,\bfOmega,\bfDelta,\bfmu,\bfSigma)$.
When $\bfxi=\bzero$, $\bfOmega=\bfI$ and $\bfmu=\bzero$, we omit them among the parameters for simplicity and we write
$\Psi_{m,n}(\bfx;\bfDelta,\bfSigma)$ and $\psi_{m,n}(\bfx;\bfDelta,\bfSigma)$ instead.
%
We recall that the closed skew-normal distribution is also known in the literature  as the unified multivariate skew-normal distribution, which simply uses a different parametrization (e.g, Ch. 7.1.2 in \citeNP{azzalini2013skew}). The exposition of our results benefits from the parametrization used by the closed skew-normal distribution.

We recall that if $\bfX\sim CSN_{m,n}(\bfxi,\bfOmega,\bfDelta,\bfmu,\bfSigma)$ then
\begin{equation}\label{eq: CSN cdf}
\Psi_{m,n}(\bfx;\bfxi, \bfOmega, \bfDelta,\bfSigma)=\frac{\Phi_{n+m}(\tilde{\bfx};\tilde{\bfOmega})}{\Phi_n(\bfzero;\bfmu,\bfGamma)},
\end{equation}
where
\[
\tilde{\bfx}=
\begin{pmatrix}
-\bfmu\\
\bfx-\bfxi
\end{pmatrix},
\quad
\tilde{\bfOmega}=
\begin{pmatrix}
\bfGamma & -\bfOmega\bfDelta^\top\\
\bfDelta\bfOmega & \bfOmega
\end{pmatrix},
\]
see \citeN{azzalini2010}. Furthermore, for $\bfb\in \R^m$
and $\bfA\in\R^{q,m}$ then,
\begin{eqnarray}
\bfb+\bfX &\sim& CSN_{m,n}(\bfxi+\bfb,\bfOmega,\bfDelta,\bfmu,\bfSigma)\\
\bfA\bfX &\sim& CSN_{q,n}(\bfA\bfxi,\bfOmega^{*},\bfDelta^*,\bfmu,\bfSigma^*)\label{eq: affine CSN}
\end{eqnarray}
where $\bfOmega^{*}=\bfA\bfOmega \bfA^\top$, $\bfDelta^{*}=\bfDelta\bfOmega \bfA^\top {\bfOmega^{*}}^{-1}$ and
$\bfSigma^{*}=\bfGamma - \bfDelta^{*}\bfA\bfOmega\bfDelta^\top$,
(see Ch. 2 in \citeNP{genton2004} for details).

\subsection{Records of dependent univariate Gaussian sequences}\label{sec:uni_gauss_seq}

Let $\{X_n, n\geq 1\}$ be a second-order stationary Gaussian sequence of dependent rvs.
Without loss of generality, assume for simplicity that
$\expect(X_i)=0$, $\expect(X^2_i)=1$ for every $1\leq i\leq n$.
Throughout the paper we will refer to such a process as a stationary
standard Gaussian (SSG) sequence.
For any $n\in\N$, let $\calI\subset\{1,\dots,n\}$ and
$\calI^\complement=\{1,\dots,n\}\setminus{\calI}$
identify the $|\calI|$-dimensional and $|\calI^\complement|$-dimensional subvector partition such
that $\bfX=(X_1,\ldots,X_n)^{\top}=(\bfX_{\calI}^{\top},\bfX_{\calI^\complement}^{\top})^{\top}$,
with corresponding partition of the parameter $\bar{\bfSigma}$. By $|A|$ we denote the number of elements of a set $A$.

Our results rely on the following well-known important result on the conditional distribution derived from
joint Gaussian distribution. Precisely, let $\bfX=(\bfX_{\calI}^{\top},\bfX_{\calI^\complement}^{\top})^{\top}\sim N_n(\bfmu,\bfSigma)$ with corresponding partition of the parameters $\bfmu$ and
$\bfSigma$, then in \citeN[Theorem 2.5.1]{ander84} it is established that
the conditional distribution of $\bfX_{\calI^\complement}$ given that $\bfX_{\calI}=\bfx_{\calI}$, is
for all $\bfx_{\calI}\in\R^{|{\calI}|}$,
\begin{equation}\label{eq:cond_gauss}
\begin{split}
\bfX_{\calI^\complement}\vert \bfX_{\calI}&=\bfx_{\calI}\sim N_{|\calI^\complement|}
\left(\bfmu_{\calI^\complement},\bfSigma_{\calI^\complement,\calI^\complement;\calI}\right),\\
\bfmu_{\calI^\complement}&=\bfSigma_{\calI^\complement,\calI}\bar{\bfSigma}_{\calI,\calI}^{-1}\bfx_{\calI},\\
\bfSigma_{\calI^\complement,\calI^\complement;\calI}&=
\bar{\bfSigma}_{\calI^\complement,\calI^\complement}-
\bfSigma_{\calI^\complement,\calI}\bar{\bfSigma}_{\calI,\calI}^{-1}\bfSigma_{\calI,\calI^\complement}.
\end{split}
\end{equation}
Furthermore, we denote the related correlation matrix by
$$
\bar{\bfSigma}_{\calI^\complement,\calI^\complement;\calI}=
\bfsigma_{\calI^\complement,\calI^\complement;\calI}^{-1}\bfSigma_{\calI^\complement,\calI^\complement;\calI}
\bfsigma_{\calI^\complement,\calI^\complement;\calI}^{-1},
$$
where $\bfsigma_{\calI^\complement,\calI^\complement;\calI}=\text{diag}(\bfSigma_{\calI^\complement,\calI^\complement;\calI})$.
For any $j\in \{a,\ldots,b\}$, when $\calI=\{j\}$  we simplify the notation writing $X_j$ and
$\bfX_{a:b\mins j}=(X_a,\ldots,X_{j-1},X_{j+1},\ldots,X_b)^{\top}$. When $j=a$ or $j=b$ we further
simplify the notation by $\bfX_{2:b}=(X_2,\ldots X_b)^{\top}$ and
$\bfX_{1:b-1}=(X_1,\ldots,X_{b-1})^{\top}$.

In our first result we compute the probability that $X_n$ is a record together with its distribution. It is well known that $\Pro(\isrec_n=1)=1/n$
in the case of independent rv with identical continuous df  (see e.g., \citeNP{gal87})
and that the distribution of $X_n$, given that it is a record, equals that of the largest observation among $X_1,\dots,X_n$ \shortcite{falkkp2018}.

\begin{prop}\label{pro:rec_Gauss}
Let $\{X_n, n\geq 1\}$ be a SSG sequence of rvs. For every $n\geq 2$,
let $\calI=\{n\}$, $\calI^\complement=\{1,\ldots,n-1\}$. Then, the probability that $X_n$ is a record and the distribution of $X_n$, given that it is a record,
are equal to
\begin{eqnarray}\label{eq:rec_Gauss}
%
\nonumber \Pro(\isrec_n=1)&=&\Phi_{n-1}(\bzero;\bfGamma_{1:n-1;1:n-1})\label{eq:prob_rec}\\
\nonumber \Pro(X_n\leq x | \isrec_n=1) &=& \Psi_{1,n-1}\left(x;\bfvarrho_{1:n-1},
\bar{\bfSigma}_{1:n-1,1:n-1;n}\right),
\end{eqnarray}
where $\bfGamma_{1:n-1;1:n-1}$ is a $(n-1)\times(n-1)$ variance-covariance matrix whose entries of the 
associated correlation matrix $\bar{\bfGamma}_{1:n-1;1:n-1}$ are 
\begin{equation}\label{eq:par_corr_Gamma}
\gamma_{i,j;n}=\frac{1+\rho_{i,j} -\rho_{i,n}-\rho_{j,n}}{2\sqrt{(1-\rho_{i,n})(1-\rho_{j,n})}}, \; i\neq n, j\neq n
\end{equation}
and $\bar{\bfSigma}_{1:n-1,1:n-1;n}$ is a $(n-1)\times(n-1)$ correlation matrix with entries
\begin{equation*}\label{eq:par_corr}
\rho_{i,j;n}=\frac{\rho_{i,j}-\rho_{i,n}\rho_{j,n}}{\sqrt{(1-\rho^2_{i,n})(1-\rho^2_{j,n})}}, \; i\neq n, j\neq n.
\end{equation*}
\end{prop}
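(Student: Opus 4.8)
The plan is to reduce the record event $\{\isrec_n=1\}$ to a system of linear inequalities on the Gaussian vector $\bfX=(X_1,\dots,X_n)^\top$, and then recognize both the resulting probability and the conditional law of $X_n$ as instances of the Gaussian and CSN formulas recalled in Section~\ref{sec:prelim}. The starting observation is that $X_n$ is a record precisely when $X_n>X_i$ for all $i\le n-1$, i.e.\ when the $(n-1)$ differences $D_i:=X_i-X_n$ are all negative. Since $\bfX\sim N_n(\bzero,\bar{\bfSigma})$ and the map $\bfX\mapsto(D_1,\dots,D_{n-1})^\top$ is linear, the vector $\bfD:=(D_1,\dots,D_{n-1})^\top$ is again centered Gaussian, and $\Pro(\isrec_n=1)=\Pro(\bfD<\bzero)=\Phi_{n-1}(\bzero;\bfGamma_{1:n-1;1:n-1})$, where $\bfGamma_{1:n-1;1:n-1}$ is the covariance matrix of $\bfD$.

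The first substantive step is to compute that covariance. For $i,j\ne n$ one has $\var(D_i)=\var(X_i-X_n)=2(1-\rho_{i,n})$ and $\cov(D_i,D_j)=\cov(X_i-X_n,X_j-X_n)=\rho_{i,j}-\rho_{i,n}-\rho_{j,n}+1$, using $\var(X_k)=1$ and $\rho_{k,k}=1$. Normalizing by the standard deviations $\sqrt{2(1-\rho_{i,n})}$ and $\sqrt{2(1-\rho_{j,n})}$ yields exactly the correlation entries $\gamma_{i,j;n}$ displayed in \eqref{eq:par_corr_Gamma}; since $\Phi_{n-1}(\bzero;\cdot)$ depends on its argument only through the associated correlation matrix (the event $\{\bfD<\bzero\}$ is scale-invariant in each coordinate), this establishes the probability formula. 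I would then note that this quantity is a pure function of the correlations $\rho_{i,j}$ and hence free of the marginal $F$, matching the paper's claimed copula invariance.

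For the conditional distribution, the idea is to condition on the value of $X_n$ and invoke the Gaussian conditioning identity \eqref{eq:cond_gauss} with $\calI=\{n\}$. Given $X_n=x$, the vector $\bfX_{\calI^\complement}=\bfX_{1:n-1}$ is Gaussian with mean $\bfvarrho_{1:n-1}\,x$ (where $\bfvarrho_{1:n-1}=(\rho_{1,n},\dots,\rho_{n-1,n})^\top$ is the column of cross-correlations) and conditional correlation matrix $\bar{\bfSigma}_{1:n-1,1:n-1;n}$ with entries $\rho_{i,j;n}$ as in the statement. The record event rewrites as $\bfX_{1:n-1}<x\bfone$, which conditionally is the event that a Gaussian vector with mean $\bfvarrho_{1:n-1}x$ falls below $x\bfone$; equivalently, after subtracting the mean, that $\bfV<x(\bfone-\bfvarrho_{1:n-1})$ for $\bfV$ centered with that conditional correlation. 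Writing the conditional cdf via Bayes' rule, $\Pro(X_n\le x\mid\isrec_n=1)=\Pro(X_n\le x,\isrec_n=1)/\Pro(\isrec_n=1)$, and expressing the numerator as $\int_{-\infty}^x \phi(u)\,\Phi_{n-1}\bigl(u(\bfone-\bfvarrho_{1:n-1});\bar{\bfSigma}_{1:n-1,1:n-1;n}\bigr)\,du$ gives precisely an integral of a standard normal density against a Gaussian cdf factor. Matching this against the CSN density \eqref{eq: CSN density} with the conditioning representation $\bfX=(\bfU\mid\bfDelta\bfU+\bfV>\bfmu)$ — taking $\bfU=X_n$ one-dimensional, $\bfDelta$ the column $-(\bfone-\bfvarrho_{1:n-1})$ and the conditioning covariance $\bar{\bfSigma}_{1:n-1,1:n-1;n}$ — identifies the conditional law as $\Psi_{1,n-1}(x;\bfvarrho_{1:n-1},\bar{\bfSigma}_{1:n-1,1:n-1;n})$, as claimed.

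The step I expect to be the \emph{main obstacle} is the bookkeeping in the last paragraph: getting the CSN parametrization aligned exactly so that the $\bfDelta$ matrix, the conditioning threshold, and the covariance recalled in \eqref{eq:cond_gauss} match the five arguments of $\Psi_{1,n-1}$ in the paper's abbreviated notation (with $\bfxi=0$, $\bfOmega=1$, $\bfmu=\bzero$ suppressed). In particular one must verify that the normalizing constant $\Phi_{n-1}(\bzero;\bfGamma_{1:n-1;1:n-1})$ produced by the CSN density's denominator $\Phi_n(\bzero;\bfmu,\bfGamma)$ coincides with the record probability computed in the first part, i.e.\ that the same matrix $\bfGamma_{1:n-1;1:n-1}$ arises both as $\cov(\bfD)$ and as $\bfSigma+\bfDelta\bfOmega\bfDelta^\top$ in the conditioning representation; this consistency check is what ties the two displayed formulas together and is where a sign or normalization slip would most easily occur.
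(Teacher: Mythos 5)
Your architecture is sound and, for the conditional distribution, is essentially the paper's own proof: condition on $X_n=z$ via \eqref{eq:cond_gauss}, write the numerator as $\int_{-\infty}^x\phi(z)\Phi_{n-1}(\cdot)\,\diff z$, and recognize the CSN density \eqref{eq: CSN density}. For the record probability your route is mildly different and arguably more elementary: you compute the covariance of the differences $D_i=X_i-X_n$ directly and use that the orthant probability $\Phi_{n-1}(\bzero;\cdot)$ depends only on the correlation matrix, whereas the paper conditions on $X_n$, standardizes, and recollapses the Gaussian mixture via Lemma~7.1 of \citeN{azzalini1996multivariate}. Your computation $\cov(D_i,D_j)=1+\rho_{i,j}-\rho_{i,n}-\rho_{j,n}$, $\var(D_i)=2(1-\rho_{i,n})$ does reproduce $\gamma_{i,j;n}$; note only that $\cov(\bfD)$ and the paper's $\bfGamma_{1:n-1;1:n-1}=\bar{\bfSigma}_{1:n-1,1:n-1;n}+\bfvarrho_{1:n-1}\bfvarrho_{1:n-1}^{\top}$ are \emph{not} the same matrix (their diagonals are $2(1-\rho_{i,n})$ and $2/(1+\rho_{i,n})$ respectively); they merely share the correlation matrix, which is all the statement pins down and all the orthant probability at $\bzero$ sees.

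The CSN bookkeeping in your third paragraph --- which you yourself flag as the danger point --- does contain the slip. In the paper's notation $\bfvarrho_{1:n-1}$ is \emph{not} the column of cross-correlations $(\rho_{1,n},\dots,\rho_{n-1,n})^{\top}$ (that column governs the conditional mean); it is the standardized vector $\bfsigma_{1:n-1,1:n-1;n}^{-1}(\bfone_{n-1}-\bar{\bfSigma}_{1:n-1,n})$ with entries $\sqrt{(1-\rho_{i,n})/(1+\rho_{i,n})}$. Consequently your displayed numerator $\int_{-\infty}^{x}\phi(u)\,\Phi_{n-1}\bigl(u(\bfone-\bfvarrho_{1:n-1});\bar{\bfSigma}_{1:n-1,1:n-1;n}\bigr)\,\diff u$ pairs an unstandardized threshold, with entries $u(1-\rho_{i,n})$, against the \emph{correlation} matrix: you must either keep the conditional covariance $\bfSigma_{1:n-1,1:n-1;n}$ alongside the threshold $u(\bfone_{n-1}-\bar{\bfSigma}_{1:n-1,n})$, or divide coordinatewise by $\sigma_{ii;n}=\sqrt{1-\rho_{i,n}^{2}}$, which converts $1-\rho_{i,n}$ into $\sqrt{(1-\rho_{i,n})/(1+\rho_{i,n})}$ and lands exactly on the paper's $\bfvarrho_{1:n-1}$. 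Relatedly, your skewness matrix $\bfDelta=-(\bfone-\bfvarrho_{1:n-1})$ has the wrong sign: in the representation $\bfX=(\bfU\mid\bfDelta\bfU+\bfV>\bzero)$ the conditioning event reads $-\bfV<\bfDelta U$, so matching the record event $\{Z_i<\varrho_i u,\ i\leq n-1\}$ requires $\bfDelta=+\bfvarrho_{1:n-1}$; with your choice one obtains instead the law of $-X_n$ given the record, with density proportional to $\phi(x)\Phi_{n-1}(-\bfvarrho_{1:n-1}x;\cdot)$. Both slips are local and would in fact be caught by the consistency check you propose at the end: with your parameters, $\bfSigma+\bfDelta\bfDelta^{\top}=\bar{\bfSigma}_{1:n-1,1:n-1;n}+(\bfone-\bar{\bfSigma}_{1:n-1,n})(\bfone-\bar{\bfSigma}_{1:n-1,n})^{\top}$ does \emph{not} have correlation entries $\gamma_{i,j;n}$, whereas with $\bfDelta=\bfvarrho_{1:n-1}$ it does, and only then does the CSN normalizing constant coincide with the record probability from the first part.
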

\begin{proof}
%
%
The probability that $X_n$ is a record is
\[
\begin{split}
\Pro(X_n>M_{n-1})&=\int_{-\infty}^{+\infty}\Pro\left(X_i<z,\,\forall \, i\in \calI^\complement \vert X_n=z\right)\phi(z)\diff z\\
&=\int_{-\infty}^{+\infty}\Pro(\bfZ_{1:n-1}\leq z\bfvarrho_{1:n-1})\phi(z)\diff z\\
&=\expect_Z\{\Pro(\bfZ_{1:n-1}\leq Z\bfvarrho_{1:n-1}|Z)\}\\
&=\Pro(\bfZ_{1:n-1}-Z\bfvarrho_{1:n-1}\leq \bfzero)\equiv\Phi_{n-1}(\bzero;\bfGamma_{1:n-1;1:n-1}),
\end{split}
\]
where 
\begin{eqnarray}
\label{eq:std_cov} \bfGamma_{1:n-1;1:n-1}&=&
\bar{\bfSigma}_{1:n-1,1:n-1;n} + \bfvarrho_{1:n-1}\bfvarrho_{1:n-1}^{\top}\\
\label{eq:std_cov_part1} \bfvarrho_{1:n-1}&=&\bfsigma_{1:n-1,1:n-1;n}^{-1}(\bfone_{n-1}-\bar{\bfSigma}_{1:n-1,n})=\left(\sqrt{\frac{1-\rho_{i,n}}{1+\rho_{i,n}}},\forall\, i\in \calI^\complement\right)^{\top}.
\end{eqnarray}
To obtain the second line we used the formula in \eqref{eq:cond_gauss}, which leads to
$\bfZ_{1:n-1}=\bfsigma_{1:n-1,1:n-1;n}^{-1}$
$(\bfX_{1:n-1}-\bfmu_{n})\sim N_{n-1}(\bzero;\bar{\bfSigma}_{1:n-1,1:n-1;n})$, where $\bfmu_{n}=(\rho_{i,n},\forall\, i\in \bcalI)^{\top}v$, and this can be
seen as independent of $Z\sim N(0,1)$. From the third to fourth row we used Lemma~7.1 in \citeN{azzalini1996multivariate}.
With similar steps, we obtain the distribution for the record $X_n$,
\[
\begin{split}
\Pro(X_n\leq x | \isrec_n=1)&=\frac{\Pro(X_n\leq x, X_n>M_{n-1})}{\Pro(X_n>M_{n-1})},\\
&=\frac{\int_{-\infty}^{x}\phi(z)\Phi_{n-1}(z\bfvarrho_{1:n-1};
\bar{\bfSigma}_{1:n-1,1:n-1;n})\diff z}{\Phi_{n-1}(\bzero;\bfGamma_{1:n-1;1:n-1})}\\
&\equiv\Psi_{1,n-1}\left(x;\bfvarrho_{1:n-1},\bar{\bfSigma}_{1:n-1,1:n-1;n}\right).
\end{split}
\]
\end{proof}
The correlations $\rho_{i,j}$, $1\leq i<j\leq n$, in Proposition \ref{pro:rec_Gauss} satisfy $-1\leq \rho_{i,j;n}\leq 1$ \cite{kurowicka2006}  but 
they must also be  such as to satisfy $-1\leq \gamma_{i,j;n}\leq 1$ or 
\[
(\rho_{i,n}+\rho_{j,n}-1)-2\sqrt{(1-\rho_{i,n})(1-\rho_{j,n})} \leq \rho_{i,j} \leq (\rho_{i,n}+\rho_{j,n}-1)+2\sqrt{(1-\rho_{i,n})(1-\rho_{j,n})}.
\]
\begin{rem}\label{ex: independence rec}
Assume in Proposition \ref{pro:rec_Gauss} that $\rho_{i,j}=0$ for all $1\leq i\neq j \leq n$.
Then,
\[
\begin{split}
\Pro\left(\isrec_n=1\right)&=\Phi_{n-1}(\bfzero;\bfI_{n-1}+\bfone_{n-1}\bfone_{n-1}^\top)\\
&=\expect\left(\Phi_{n-1}(\bfone_{n-1} Z;\bfI_{n-1})\right)\\
&=\int_{-\infty}^{+\infty}\Phi_{n-1}(\bfone_{n-1} z;\bfI_{n-1})\phi(z)\diff z=\int_{-\infty}^{+\infty}\Phi^{n-1}(z)\phi(z)\diff z=n^{-1},
\end{split}
\]
where $Z\sim N(0,1)$. 
As expected, we obtain  the results in \cite{gal87} and Lemma 1.1 in \cite{falkkp2018}.
Furthermore,
\[
\begin{split}
\Pro(X_n\leq x| \isrec_n=1)&=\Psi_{1,n-1}\left(x;\bfone_{n-1},\bfI_{n-1}\right)\\
&=n\int_{-\infty}^x\Phi_{n-1}(\bfone_{n-1} z;\bfI_{n-1})\phi(z)\diff z=n\int_{-\infty}^x\Phi^{n-1}(z)\phi(z)\diff z\\
&={\Phi(x)}^n.
\end{split}
\]
\end{rem}

Let
\[
T(k):= \inf\set{m\in\N:\, \sum_{i=1}^m \isrec_i=k}, \quad k\ge 2,\quad T(1):=1,
\]
be the arrival time of the $k$-th record.

\begin{lemma}\label{lemma: joint_times}
Let ${\{T(k)\}}_{k\geq 2}$ be the arrival time process of records. Let $\calI=\{j_2,\dots,j_k\}$ where $2\leq j_2<\dots<j_k\in\N$ and $j_1:=1$. Set $\calI^\complement := \{1,\dots,j_k\}\setminus \calI$.
Then,
\begin{equation*}\label{eq: joint_times}
\begin{split}
&\Pro(T(i)=j_i,i=2,\dots,k)\\
&=\Phi_{j_k-k}(\bfzero;\bfGamma_{\calI^\complement, \calI^\complement})\Psi_{k-1,j_k-k}(\bfzero;\bfD\bar{\bfSigma}_{\calI,\calI}\bfD^\top,\bfDelta,
\bar{\bfSigma}_{\calI^\complement,\calI^\complement;\calI})
\end{split}
\end{equation*}
where $\bfD=(\bfI_{k-1}\quad\bfzero_{k-1})-(\bfzero_{k-1}\quad\bfI_{k-1})$,
\begin{eqnarray}
\label{eq:Dmat} \bfDelta&=&\bfvarrho_{\calI^\complement,\calI^\complement}\bar{\bfSigma}_{\calI,\calI}\bfD^\top{(\bfD\bar{\bfSigma}_{\calI,\calI}\bfD^\top)}^{-1},\\
\label{eq:Gmat}\bfGamma_{\calI^\complement, \calI^\complement}&=&\bfvarrho_{\calI^\complement, \calI^\complement}\bar{\bfSigma}_{\calI,\calI}\bfvarrho_{\calI^\complement, \calI^\complement}^\top+\bar{\bfSigma}_{\calI^\complement,\calI^\complement;\calI},\\
\label{eq:rhomat}\bfvarrho_{\calI^\complement,\calI^\complement}&=&\bfsigma_{\calI^\complement,\calI^\complement;\calI}^{-1}(\bfB-\bfSigma_{\calI^\complement, \calI}
\bar{\bfSigma}_{\calI,\calI}^{-1}),
\end{eqnarray}
and
\begin{equation}\label{eq:Bmat}
\bfB:=
\begin{pmatrix}
\mathbf{1}_{j_2-2} &\bfzero_{j_2-2} & \dots &\bfzero_{j_2-2}&\bfzero_{j_2-2}\\
\bfzero_{j_3-j_2-1} & \mathbf{1}_{j_3-j_2-1} & \dots   &\bfzero_{j_3-j_2-1}&\bfzero_{j_3-j_2-1}\\
\vdots & \vdots &  & \vdots&\vdots\\
\bfzero_{j_k-j_{k-1}-1} & \bfzero_{j_k-j_{k-1}-1} & \dots & \mathbf{1}_{j_k-j_{k-1}-1}& \bfzero_{j_k-j_{k-1}-1}
\end{pmatrix}\in\R^{j_k-k,k-1}
\end{equation}

\end{lemma}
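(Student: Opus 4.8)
The plan is to realise the event $\{T(i)=j_i,\ i=2,\dots,k\}$ as a single system of linear inequalities on $\bfX_{1:j_k}$, to condition on the record values $\bfX_\calI$ exactly as in the proof of Proposition~\ref{pro:rec_Gauss}, and then to recognise the resulting integral as a closed skew-normal probability that is pushed through the consecutive-difference map $\bfD$ by the affine property \eqref{eq: affine CSN}; here and below $\calI=\{j_1,\dots,j_k\}$ denotes the $k$ record positions (with $j_1=1$) and $\calI^\complement$ the $j_k-k$ intervening non-record positions. The first step is to translate the event into inequalities: the first $k$ records falling precisely at $1=j_1<\dots<j_k$ is equivalent to the conjunction of the \emph{ordering} constraints $X_{j_1}<\dots<X_{j_k}$, i.e.\ $\bfD\bfX_\calI<\bfzero$, and the \emph{non-record} constraints $X_m\le X_{j_\ell}$ for every $m$ with $j_\ell<m<j_{\ell+1}$. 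This equivalence follows by induction on the running maximum: under the non-record constraints $M_{m-1}=X_{j_\ell}$ throughout the block $j_\ell<m<j_{\ell+1}$, so $\isrec_m=0$ there, while the ordering constraints force $\isrec_{j_\ell}=1$; ties are negligible since $F$ is continuous. Writing the $j_k-k$ non-record inequalities in matrix form gives precisely $\bfX_{\calI^\complement}\le\bfB\bfX_\calI$, with the $0/1$ matrix $\bfB$ of \eqref{eq:Bmat} recording which earlier record bounds each non-record.

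Next I would condition on $\bfX_\calI=\bfx_\calI$. By the Gaussian conditioning formula \eqref{eq:cond_gauss} and the standardisation used in Proposition~\ref{pro:rec_Gauss}, the vector $\bfZ_{\calI^\complement}:=\bfsigma_{\calI^\complement,\calI^\complement;\calI}^{-1}(\bfX_{\calI^\complement}-\bfmu_{\calI^\complement})$ is $N_{j_k-k}(\bfzero,\bar{\bfSigma}_{\calI^\complement,\calI^\complement;\calI})$ and independent of $\bfx_\calI$, and the non-record event $\{\bfX_{\calI^\complement}\le\bfB\bfx_\calI\}$ turns into $\{\bfZ_{\calI^\complement}\le\bfvarrho_{\calI^\complement,\calI^\complement}\bfx_\calI\}$ with $\bfvarrho_{\calI^\complement,\calI^\complement}$ exactly as in \eqref{eq:rhomat}. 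Integrating the records out against their $N_k(\bfzero,\bar{\bfSigma}_{\calI,\calI})$ density over the ordering region then gives
\[
\Pro(T(i)=j_i,\ i=2,\dots,k)=\int_{\{\bfD\bfx<\bfzero\}}\phi_k(\bfx;\bar{\bfSigma}_{\calI,\calI})\,\Phi_{j_k-k}\!\left(\bfvarrho_{\calI^\complement,\calI^\complement}\bfx;\bar{\bfSigma}_{\calI^\complement,\calI^\complement;\calI}\right)\diff\bfx.
\]

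Finally I would read the integrand through the closed skew-normal density \eqref{eq: CSN density}: up to the constant $\Phi_{j_k-k}(\bfzero;\bfGamma_{\calI^\complement,\calI^\complement})$ with $\bfGamma_{\calI^\complement,\calI^\complement}=\bar{\bfSigma}_{\calI^\complement,\calI^\complement;\calI}+\bfvarrho_{\calI^\complement,\calI^\complement}\bar{\bfSigma}_{\calI,\calI}\bfvarrho_{\calI^\complement,\calI^\complement}^\top$ as in \eqref{eq:Gmat}, it is the density of $\bfY\sim CSN_{k,j_k-k}(\bfzero,\bar{\bfSigma}_{\calI,\calI},\bfvarrho_{\calI^\complement,\calI^\complement},\bfzero,\bar{\bfSigma}_{\calI^\complement,\calI^\complement;\calI})$. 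Hence the probability factors as $\Phi_{j_k-k}(\bfzero;\bfGamma_{\calI^\complement,\calI^\complement})\,\Pro(\bfD\bfY<\bfzero)$, and applying the affine rule \eqref{eq: affine CSN} with $\bfA=\bfD$ shows that $\bfD\bfY$ is again closed skew-normal, with scale matrix $\bfD\bar{\bfSigma}_{\calI,\calI}\bfD^\top$ and skewness matrix $\bfDelta$ of \eqref{eq:Dmat}; thus $\Pro(\bfD\bfY<\bfzero)$ is the corresponding cdf $\Psi_{k-1,j_k-k}$, whose parameters are read off from \eqref{eq: affine CSN}, and the two factors combine to the claimed expression.

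The step I expect to be the main obstacle is this last reduction. The ordering constraints involve only the $k-1$ consecutive differences $\bfD\bfX_\calI$, so the map $\bfD$ annihilates the overall-level direction $\bfone_k$ of $\bfX_\calI$ and forces a passage from a $(k,\cdot)$- to a $(k-1,\cdot)$-dimensional closed skew-normal law. Checking that the parameters produced by \eqref{eq: affine CSN} agree with $\bfD\bar{\bfSigma}_{\calI,\calI}\bfD^\top$, \eqref{eq:Dmat} and \eqref{eq:Gmat} — especially correctly identifying the conditional scale matrix carried through the map $\bfD$, which is the delicate point — is the only genuinely computational part, everything upstream being bookkeeping. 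It is prudent to verify $\bfvarrho_{\calI^\complement,\calI^\complement}$ and $\bfGamma_{\calI^\complement,\calI^\complement}$ against the i.i.d.\ specialisation of Remark~\ref{ex: independence rec}, where the record-time probabilities are known in closed form, as a check that the scale matrices have been tracked correctly.
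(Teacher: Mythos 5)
Your proposal follows essentially the same route as the paper's proof: you decompose the event into the ordering constraints on the record values and the blockwise non-record constraints $\bfX_{\calI^\complement}\le\bfB\bfX_\calI$, condition on $\bfX_\calI$ via \eqref{eq:cond_gauss} and standardise to get $\Phi_{j_k-k}(\bfvarrho_{\calI^\complement,\calI^\complement}\bfx;\bar{\bfSigma}_{\calI^\complement,\calI^\complement;\calI})$, recognise the integrand as a $CSN_{k,j_k-k}$ density with normalising constant $\Phi_{j_k-k}(\bfzero;\bfGamma_{\calI^\complement,\calI^\complement})$, and push through the difference map $\bfD$ by the affine property \eqref{eq: affine CSN} — exactly the paper's sequence of steps. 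Your additional material (the induction justifying the event equivalence, which the paper asserts without proof, and the i.i.d.\ sanity check on the scale matrices) is sound supplementary detail rather than a different argument.
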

\begin{proof}
We have
\[
\begin{split}
&\Pro(T(i)=j_i,i=2,\dots,k)\\
&=\Pro(M_{j_i+1:j_{i+1}-1} <X_i,i=1,\dots,k-1,X_{j_{k-1}}<X_{j_k})\\
&=\int_{-\infty}^{+\infty}\int_{-\infty}^{z_k}\dots\int_{-\infty}^{z_2}\Pro(M_{j_i+1:j_{i+1}-1}<z_i,i=1,\dots,k-1|X_{j_i}=z_i,i=1,\dots,k-1)\\
&\quad\cdot \phi_k(z_1,\dots,z_k)\diff z_1\dots\diff z_k\\
&=\int_{-\infty}^{+\infty}\int_{-\infty}^{z_k}\dots\int_{-\infty}^{z_2}\Pro(\bfX_{\calI^\complement}<\bfB\bfz | \bfX_{\calI}=\bfz)\phi_k(\bfz;\bar{\bfSigma}_{\calI,\calI})\diff \bfz\\
\end{split}
\]
where $\bfB$ is given in \eqref{eq:Bmat}.
By standardizing the random vector $\bfX_{\bcalI}$, we obtain
\[
\begin{split}
&\int_{-\infty}^{+\infty}\int_{-\infty}^{z_k}\dots\int_{-\infty}^{z_2}
\Phi_{j_k-k}(\bfvarrho_{\calI^\complement,\calI^\complement} \bfz;\bar{\bfSigma}_{\calI^\complement,\calI^\complement;\calI})\phi_k(\bfz;\bar{\bfSigma}_{\calI,\calI})\diff \bfz\\
&=\Phi_{j_k-k}(\bfzero;\bfGamma_{\calI^\complement, \calI^\complement})\int_{-\infty}^{+\infty}\int_{-\infty}^{z_k}\dots\int_{-\infty}^{z_2}\psi_{k,j_k-k}(\bfzero;\bar{\bfSigma}_{\calI,\calI},\bfvarrho_{\calI^\complement,\calI^\complement},\bar{\bfSigma}_{\bfX_{\bcalI},\bfX_{\bcalI};\bfX_{\calI}})\diff \bfz\\
&=\Phi_{j_k-k}(\bfzero;\bfGamma_{\calI^\complement, \calI^\complement})\Pro(Z_1<Z_2<\dots<Z_k)\\
&=\Phi_{j_k-k}(\bfzero;\bfGamma_{\calI^\complement, \calI^\complement})\Pro(Z_1-Z_2<0,\dots,Z_{k-1}-Z_k<0)\\
\end{split}
\]
where $\bfGamma_{\calI^\complement, \calI^\complement}$ and $\bfvarrho_{\calI^\complement,\calI^\complement}$ are
given in \eqref{eq:Gmat} and \eqref{eq:rhomat}.

By recalling formula \eqref{eq: affine CSN},  we obtain
\[
\begin{split}
&
\begin{pmatrix}
Z_1-Z_2\\
\vdots\\
Z_{k-1}-Z_k
\end{pmatrix}
=
\begin{pmatrix}
1 & -1 & 0 & 0 & \dots & 0\\
0&1& -1 & 0 & \dots & 0\\
\dots\\
0 & \dots & 0& 0&1& -1
\end{pmatrix}
\begin{pmatrix}
Z_1\\
\vdots\\
Z_k
\end{pmatrix}\\
&=\bfD\bfZ\sim CSN_{k-1,j_k-k}(\bfD\bar{\bfSigma}_{\calI,\calI}\bfD^\top,\bfDelta,\bar{\bfSigma}_{\calI^\complement,\calI^\complement;\calI})
\end{split}
\]
where $\bfDelta$ is given in \eqref{eq:Dmat}
\end{proof}

In the next result we establish the distribution of the arrival time $T(2)$ of the second record as well as that of the increment $X_{T(2)}-X_1$.

\begin{theorem}\label{teo:time_2nd_rec}
Let $\{X_n, n\geq 1\}$ be a SSG sequence of rvs. Let $\rho_{i,j}=\expect(X_i,X_j)$ with $1\leq i\neq j\leq n$ . Assume that for $n\to\infty$, 
$\rho_{i,j}\to 0$ as $|j-i|\to\infty$ and $\rho_{k,n}\to 1$ as $k\to\infty$. 
For $n=2,3,\ldots$, the distribution of the arrival time of the second record $T(2)$ is
\begin{equation}\label{eq:time_2nd_rec}
\Pro\left(T(2)=n\right)=
\begin{cases}
1/2, & n=2,\\
\Phi_{n-2}(\bzero;\bfGamma_{2:n-1,2:n-1})-
\Phi_{n-1}(\bzero;\bfGamma_{2:n,2:n}), & n> 2
\end{cases}
\end{equation}
where $\bfGamma_{2:n-1,2:n-1}$ and $\bfGamma_{2:n,2:n}$ are defined similarly to \eqref{eq:std_cov}.
Furthermore, for every $x>0$, the distribution of the increment $X_{T(2)}-X_1$ is
\begin{equation}\label{eq:increment_cdf}
H(x)=\sum_{n\geq 2} \Phi_{n-1}\left(\bfu_x;\bfGamma_{2:n,2:n}\right)-\Phi_{n-1}\left(\bfzero;\bfGamma_{2:n,2:n}\right),
%
%
%
%
\end{equation}
where $\bfu_x=(x/(1-\rho^2_{1,n})^{1/2},0,\ldots,0)^\top$ is an $(n-1)$-dimensional vector.
\end{theorem}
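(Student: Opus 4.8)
The plan is to reduce both $\Pro(T(2)=n)$ and the increment law to orthant- and rectangle-probabilities of a single centred Gaussian vector of standardized differences taken with respect to $X_1$. First I would observe that, since $X_1=M_1$ is automatically the first record, the event $\{T(2)=n\}$ is exactly $\{X_2<X_1,\dots,X_{n-1}<X_1,\ X_n>X_1\}$: no record occurs before time $n$ precisely when the running maximum never leaves $X_1$, and the second record arrives at $n$ when $X_n$ first exceeds $X_1$. For $n=2$ there are no intermediate constraints and $\Pro(T(2)=2)=\Pro(X_2>X_1)=1/2$, because $X_2-X_1$ is a centred Gaussian and hence symmetric about $0$.

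Next I would pass to the standardized differences $D_i:=(X_i-X_1)/\sqrt{1-\rho_{1,i}^2}$, $i=2,\dots,n$, so that $\{X_i<X_1\}=\{D_i<0\}$ and, crucially, $X_n-X_1=\sqrt{1-\rho_{1,n}^2}\,D_n$. Conditioning on $X_1$ and invoking \eqref{eq:cond_gauss} with $\calI=\{1\}$, exactly as in Proposition~\ref{pro:rec_Gauss}, the standardized conditional residuals are independent of $X_1$, so $\bfD=(D_2,\dots,D_n)^\top$ is a centred Gaussian vector whose covariance is $\bar{\bfSigma}_{2:n,2:n;1}+\bfvarrho\bfvarrho^\top$, with $\bfvarrho$ as in \eqref{eq:std_cov_part1} but now with conditioning index $1$. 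This is precisely the matrix $\bfGamma_{2:n,2:n}$ built as in \eqref{eq:std_cov}, its correlation entries being the $\gamma_{i,j;1}$ of \eqref{eq:par_corr_Gamma}.

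With this identification the two claims follow by elementary telescoping. For the arrival time I would write $\Pro(T(2)=n)=\Pro(D_2<0,\dots,D_{n-1}<0)-\Pro(D_2<0,\dots,D_{n-1}<0,D_n<0)$ and read off the two terms as $\Phi_{n-2}(\bzero;\bfGamma_{2:n-1,2:n-1})$ and $\Phi_{n-1}(\bzero;\bfGamma_{2:n,2:n})$, which is \eqref{eq:time_2nd_rec} (the value $1/2$ at $n=2$ is recovered with the conventions $\Phi_0\equiv1$ and $\Phi_1(0;\cdot)=1/2$). For the increment I would decompose $H(x)=\sum_{n\ge2}\Pro(X_n-X_1\le x,\,T(2)=n)$; on $\{T(2)=n\}$ the constraint $X_n-X_1\le x$ becomes $D_n\le x/\sqrt{1-\rho_{1,n}^2}$, so the summand equals $\Pro(D_i<0,\,i<n,\ D_n\le x/\sqrt{1-\rho_{1,n}^2})-\Pro(D_i<0,\,i\le n)$. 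Subtracting the $\{D_n\le0\}$ piece turns the summand into $\Phi_{n-1}(\bfu_x;\bfGamma_{2:n,2:n})-\Phi_{n-1}(\bzero;\bfGamma_{2:n,2:n})$, where $\bfu_x$ carries the single nonzero threshold $x/\sqrt{1-\rho_{1,n}^2}$ in the coordinate attached to $D_n$ and zeros elsewhere; summing over $n$ yields \eqref{eq:increment_cdf}.

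The formulas themselves are thus a direct computation; the delicate point, and where I expect the real work to lie, is showing that these are genuine (proper) distributions, which is where the hypotheses $\rho_{i,j}\to0$ as $|i-j|\to\infty$ enter. One must verify that $T(2)<\infty$ almost surely, equivalently that $\Pro(T(2)>m)=\Pro(X_2<X_1,\dots,X_m<X_1)=\Phi_{m-1}(\bzero;\bfGamma_{2:m,2:m})\to0$ as $m\to\infty$. Intuitively the decorrelation makes the $X_n$ asymptotically independent of the almost surely finite $X_1$, so that infinitely many of them keep a bounded-below chance of exceeding $X_1$ and the survival probability collapses to $0$; making this rigorous (and checking the companion statement $H(x)\to1$ as $x\to\infty$, which follows from the same limit since $\Phi_{n-1}(\bfu_x;\bfGamma_{2:n,2:n})\uparrow\Phi_{n-2}(\bzero;\bfGamma_{2:n-1,2:n-1})$ term by term) is the main obstacle. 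Everything else reduces to the conditional Gaussian formula \eqref{eq:cond_gauss} and the bookkeeping of the covariance matrices.
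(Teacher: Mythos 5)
Your derivation of the two formulas is correct and is in fact the same computation the paper performs: the identification $\{T(2)=n\}=\{X_2<X_1,\dots,X_{n-1}<X_1,X_n>X_1\}$, conditioning on $X_1$ via \eqref{eq:cond_gauss} with $\calI=\{1\}$, the standardized differences $(X_i-X_1)/\sqrt{1-\rho_{1,i}^2}$ whose covariance is exactly the matrix $\bfGamma_{2:n,2:n}$ built as in \eqref{eq:std_cov} with correlation entries $\gamma_{i,j;1}$ from \eqref{eq:par_corr_Gamma}, and the telescoping/rectangle decompositions that yield \eqref{eq:time_2nd_rec} and \eqref{eq:increment_cdf} (your placement of the single nonzero threshold of $\bfu_x$ in the coordinate of $D_n$ is right; the paper's writing it in the first coordinate is only a labeling convention). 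Up to this point there is nothing to object to.

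The genuine gap is the step you explicitly defer: showing that $P_m:=\Pro(X_2<X_1,\dots,X_m<X_1)=\Phi_{m-1}(\bzero;\bfGamma_{2:m,2:m})\to 0$ as $m\to\infty$, i.e.\ that $T(2)<\infty$ almost surely so that \eqref{eq:time_2nd_rec} sums to one and $H$ in \eqref{eq:increment_cdf} is a genuine distribution function. This is the only place the hypotheses $\rho_{i,j}\to0$ and $\rho_{k,n}\to1$ are consumed, and it is where the paper spends most of its proof; your ``asymptotic independence from $X_1$, so infinitely many exceedances'' argument is a Borel--Cantelli heuristic that does not survive dependence without quantitative control --- for a degenerate periodic stationary Gaussian sequence (say $X_{n+p}=X_n$ with $\rho_h=\cos(\lambda h)$ non-decaying), one has $\Pro(T(2)=\infty)\geq 1/p>0$, so \emph{some} decorrelation estimate is unavoidable. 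The paper supplies it by a second-moment argument: with $\tilde{\bfX}$ centred Gaussian with covariance $\bfGamma$, each $\Pro(\tilde X_i\leq 0)=1/2$, so $P_m=\Pro\{\sum_i(\indic(\tilde X_i\leq0)-\tfrac12)\geq\tfrac{m-1}{2}\}$, and Chebyshev's inequality bounds this by a constant times $m^{-2}\sum_{i,j}\cov(\indic(\tilde X_i\leq0),\indic(\tilde X_j\leq0))=m^{-2}\sum_{i,j}(\Phi_2(0;\gamma_{i,j;m})-\tfrac14)$; grouping by the lag $h=|i-j|$, the stated hypotheses force $\gamma_{h;m}\to0$, hence $\Phi_2(0;\gamma_{h;m})\to 1/4$, so the double sum is $o(m^2)$ and $P_m\to0$ (the Fr\'echet lower bound being $0$ for $m\geq2$). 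Without this estimate, or an equivalent one, your proposal is incomplete at precisely its one nontrivial point, even though the formula-producing part coincides with the paper's proof.
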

\begin{proof}
When $n=2$ we have
\[
\Pro(T(2)=2)=\Pro(X_2>X_1)=1/2.
\]
For $n>2$ we have
\[
\begin{split}
\Pro(T(2)=n)&=\Pro\left( X_i<X_1,\,i=2,\dots,n-1,X_n>X_1\right)\\
&=\Pro\left( X_i<X_1,\,i=2,\dots,n-1\right)-\Pro\left( X_i<X_1,\,i=2,\dots,n\right).
\end{split}
\]
Therefore, \eqref{eq:time_2nd_rec} follows by similar arguments to those used in Proposition \ref{pro:rec_Gauss}.
It must be checked that
\[
\begin{split}
\sum_{n\geq 2} \Pro(T(2)=n)&=\frac{1}{2}+
\lim_{N\to\infty}\sum_{n=3}^N\left(\Phi_{n-2}(\bzero;\bfGamma_{2:n-1,2:n-1})-\Phi_{n-1}(\bzero;\bfGamma_{2:n,2:n})\right)\\
&=\lim_{N\to\infty}\left(1-\Phi_{2}\left(\bzero;\bfGamma_{1:2,1:2}\right)+
\Phi_{2}\left(\bzero;\bfGamma_{1:2,1:2}\right)\right.\\
&\left.-\dots+\Phi_{N-2}\left(\bzero;\bfGamma_{1:N-2,1:N-2}\right)-
\Phi_{N-1}\left(\bzero;\bfGamma_{1:N-1,1:N-1}\right)\right)\\
&=1-\lim_{N\to\infty}\Phi_{N-1}\left(\bzero;\bfGamma_{1:N-1,1:N-1}\right)=1.
\end{split}
\]
Let $(\tilde{X}_1,\ldots,\tilde{X}_{n-1})$ be zero-mean unit-variance Gaussian sequence with variance-covariance matrix
$\bfGamma_{1:n-1,1:n-1}$. 
Set $P_n=\Pro(\tilde{X}_i\leq 0,\ldots,\tilde{X}_{n-1}\leq 0)=\Phi_{n-1}\left(\bzero;\bfGamma_{1:n-1,1:n-1}\right)$.
Clearly $\Phi_{n-1}\left(\bzero;\bfGamma_{1:n-1,1:n-1}\right)=\Phi_{n-1}\left(\bzero;\bar{\bfGamma}_{1:n-1,1:n-1}\right)$.  
We recall that $\Pro(\tilde{X}_i\leq0)=1/2$ for every $i=1,\ldots,n-1$.
 By the Fr\'{e}chet inequalities we have that
$$
A_n:=\max\left(0,\sum_{i=1}^n\Pro(X_i\leq0)-(n-1)\right)=\max(0,1-n/2)\leq P_n \leq 1/2.
$$
For $P_n$ we derive the following upper bound $B_n$. Precisely,
\begin{eqnarray*}
P_n&=&\Pro\left(\sum_{i=1}^{n-1}\indic(\tilde{X}_i\leq 0)\geq n-1\right)=\Pro\left\{\sum_{i=1}^{n-1}\left(\indic(\tilde{X}_i\leq 0)-\frac 12\right)\geq \frac{n-1}{2}\right\}\\
&\leq & \Pro\left\{\left|\sum_{i=1}^{n-1}\left(\indic(\tilde{X}_i\leq 0)-\frac 12\right)\right|\geq \frac{n-1}{2}\right\}\\
&\leq &\frac{4}{(n-2)^2}\expect\left[\left\{ \sum_{i=1}^{n-1}\left(\indic(\tilde{X}_i\leq 0)-\frac 12\right) \right\}^2\right]\\
&=& \frac{4}{(n-2)^2} \sum_{i=1}^{n-1} \sum_{j=1}^{n-1}\cov(\indic(\tilde{X}_i\leq 0), \indic(\tilde{X}_j\leq 0))\\
&=& \frac{4}{(n-2)^2} \sum_{i=1}^{n-1} \sum_{j=1}^{n-1}\cov(P_{i,j;n}-1/4)=: B_n,
\end{eqnarray*}
where $P_{i,j;n}:=\Pro(\tilde{X}_i\leq 0,\tilde{X}_j\leq 0)=\Phi_2(0;\gamma_{i,j;n})$ and where
$\Phi_2(\cdot;\gamma_{i,j;n})$ is a bivariate Gaussian cdf with correlation $\gamma_{i,j;n}$ that
is given in \eqref{eq:par_corr_Gamma}. In the third row we used the Chebyshev's inequality. Set
$h=|j-i|$ we rewrite $B_n$ as
\begin{eqnarray*}\label{eq:B_n}
\nonumber B_n&=& \frac{4}{(n-2)^2} \sum_{h=0}^{n-2}2(n-h)(P_{h;n}-1/4)\\
&=& \frac{8}{n(1+2/n)^2} (P_{0;n}-1/4) +  \frac{8}{n(1+2/n)^2}\sum_{h=1}^{n-2}\left(1-\frac{h}{n}\right)(P_{h;n}-1/4)\\
&=&\alpha_n+\beta_n,
\end{eqnarray*}
where $P_{h;n}:=\Pro(\tilde{X}_0\leq 0,\tilde{X}_h\leq 0)=\Phi_2(0;\gamma_{h;n})$ and
$$
\gamma_{h;n}=\frac{1+\rho_{0,h} -\rho_{0,n-i}-\rho_{h,n-i}}{2\sqrt{(1-\rho_{0,n-i})(1-\rho_{h,n-i})}},\quad h=0,\ldots,n-2.
$$
Now, when $h=0$ we obtain $\gamma_{0;n}=1$ and therefore $P_{0;n}=1/2$ and as a consequence the term 
$\alpha_n\to0$ as $n\to\infty$. We rewrite the term $\beta_n$ as
\begin{eqnarray*}
\beta_n &=& \frac{8}{n(1+2/n)^2}\sum_{h=1}^{n-2}(P_{h;n}-1/4)-  \frac{8}{n(1+2/n)^2}\sum_{h=1}^{n-2}\frac{h}{n}(P_{h;n}-1/4)\\
&=& c_n - d_n.
\end{eqnarray*}
Now by the assumption we have that for $n\to\infty$, $\gamma_{h;n}\to 0$ as $h\to\infty$, therefore for all $\varepsilon>0$
there exists a $n_0$ such that for all $h>n_0$ we have $|P_{h;n}-1/4|<\varepsilon$. As a consequence we have
\begin{eqnarray*}
c_n &=& \frac{8}{n(1+2/n)^2}\left(\sum_{h=1}^{n_0}(P_{h;n}-1/4)+\sum_{h=n_0+1}^{n-2}(P_{h;n}-1/4)\right)\\
&<&  \frac{8}{n(1+2/n)^2} \left(c  + \varepsilon(n-2+n_0+1) \right)=o(1),
\end{eqnarray*}
where $c$ is a positive constant. Therefore, $c_n\to 0$ as $n\to \infty$ and since $d_n<c_n$ then
$\beta_n\to 0$ and $B_n\to0$ as $n\to \infty$. Concluding, since $A_n\leq P_n\leq B_n$ and $A_n=0$ for $n\geq 2$, then $P_n\to0$ as
$n\to\infty$.

Finally, for every $x>0$ the distribution of the increment $X_{T(2)}-X_1$ is
\[
\begin{split}
\Pro(X_{T(2)}-X_1\leq x)&=\sum_{n\geq 2}\Pro(X_n-X_1\leq x,T(2)=n)\\
&=\sum_{n\geq 2}\Pro(X_n-X_1\leq x,X_i<X_1,\,i=2,\dots,n-1,X_n>X_1)\\
&=\sum_{n\geq 2}\Pro(0<X_n-X_1\leq x,X_i<X_1,\,i=2,\dots,n-1)
\end{split}
\]
The term inside the sum is equal to
\[
\begin{split}
&\Pro(0<X_n-X_1\leq x,X_i<X_1,\,i=2,\dots,n-1)\\
&=\int_{-\infty}^{+\infty}\Pro(0<X_n-u\leq x,X_i<u,\,i=2,\dots,n-1\vert X_1=z)\phi(z)\diff z\\
&=\int_{-\infty}^{+\infty}\Pro(X_i<x,\,i=2,\dots,n-1,X_n\leq z+x\vert X_1=z)\phi(z)\diff z\\
&\quad-\int_{-\infty}^{+\infty}\Pro(X_i<z,\,i=2,\dots,n\vert X_1=z)\phi(z)\diff z.
\end{split}
\]
Therefore, \eqref{eq:increment_cdf} follows by similar arguments to those used in Proposition \ref{pro:rec_Gauss}.
\end{proof}
\begin{rem}
Note that when $\rho_{i,j}=0$ for all $1\leq i<j\leq n$ and $n>2$ we obtain
$$
\Pro\left(T(2)=n\right)=\Phi_{n-2}(\bfzero;\bfI_{n-2}+\bfone_{n-2}\bfone_{n-2}^\top)-
\Phi_{n-1}(\bfzero;\bfI_{n-1}+\bfone_{n-1}\bfone_{n-1}^\top)=\frac{1}{n-1}-\frac{1}{n}=\frac{1}{n(n-1)}.
$$
\end{rem}

Let $N:=\sum_{n=1}^\infty \isrec_{n}$ be the number of records among an infinite sequence $X_1,X_2,\ldots$
When the components of the sequence are independent and identically distributed with a continuous df, then it is a well-known result
that an infinite number of records will occur: $E(N)=\sum_{n=1}^\infty P(R_n=1) = \sum_{n=1}^\infty 1/n =\infty$ \cite{gal87}.

A natural question that arises is the following. What is the expected number of records that will take place
in the case of a stationary Gaussian process?
\begin{prop}\label{pro:numb_rec}
Let $\{X_n, n\geq 1\}$ be a SSG sequence of rvs and $\Phi_{n-1}(\bzero;\bfGamma_{1:n-1;1:n-1})$ be the probability that a record take place described in Proposition \ref{pro:rec_Gauss}.
Let $N$ be the number of records among an infinite sequence $X_1,X_2,\ldots$
Then, we have
\[
\expect(N)=
\begin{cases}
%
%
\infty, & \text{if} \quad 1/2\leq \gamma_{i,j;n}\leq 1, \quad \forall\;  1\leq i\neq j < n\\

2, & \text{if} \quad \gamma_{i,j;n}=0 , \quad\;\;\; \qquad \forall\;  1\leq i\neq j < n.
%
\end{cases}
\]
where $\gamma_{i,j;n}$ is the correlation parameter in \eqref{eq:par_corr_Gamma}.
%
%
\end{prop}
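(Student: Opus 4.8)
The plan is to compute $\expect(N)$ directly from its series representation. Since $\isrec_n\in\{0,1\}$, Tonelli's theorem gives $\expect(N)=\sum_{n=1}^\infty\Pro(\isrec_n=1)$. The first observation is always a record, so $\Pro(\isrec_1=1)=1$, while for $n\geq 2$ Proposition \ref{pro:rec_Gauss} yields $\Pro(\isrec_n=1)=\Phi_{n-1}(\bzero;\bfGamma_{1:n-1;1:n-1})$. Because the orthant probability of a centered Gaussian at the origin depends only on its correlation matrix, this equals $\Phi_{n-1}(\bzero;\bar{\bfGamma}_{1:n-1;1:n-1})$, where $\bar{\bfGamma}_{1:n-1;1:n-1}$ has off-diagonal entries $\gamma_{i,j;n}$ from \eqref{eq:par_corr_Gamma}. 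The whole proof then reduces to estimating this orthant probability under the two hypotheses on the $\gamma_{i,j;n}$.

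For the case $\gamma_{i,j;n}=0$, the correlation matrix $\bar{\bfGamma}_{1:n-1;1:n-1}$ is the identity $\bfI_{n-1}$, so the coordinates are independent and $\Pro(\isrec_n=1)=\Phi(0)^{n-1}=2^{-(n-1)}$. Summing the resulting geometric series gives $\expect(N)=1+\sum_{n\geq 2}2^{-(n-1)}=1+1=2$, which is the second claim.

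The interesting case is $\tfrac12\leq\gamma_{i,j;n}\leq 1$. Here I would invoke Slepian's inequality: since every off-diagonal entry of $\bar{\bfGamma}_{1:n-1;1:n-1}$ is at least $\tfrac12$, the orthant probability dominates that of the equicorrelated correlation matrix $\tfrac12(\bfI_{n-1}+\bfone_{n-1}\bfone_{n-1}^\top)$, whose off-diagonal entries are all equal to $\tfrac12$. For the latter I would use the one-factor representation $X_i=(Z+\varepsilon_i)/\sqrt2$ with $Z,\varepsilon_1,\ldots,\varepsilon_{n-1}$ i.i.d.\ standard normal, which has unit variances and pairwise correlations $\tfrac12$. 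Conditioning on $Z$ and using independence of the $\varepsilon_i$,
\[
\Phi_{n-1}\!\left(\bzero;\tfrac12(\bfI_{n-1}+\bfone_{n-1}\bfone_{n-1}^\top)\right)=\int_{-\infty}^{+\infty}\Phi(-z)^{n-1}\phi(z)\,\diff z=\int_0^1 u^{n-1}\,\diff u=\frac1n,
\]
the last step by the substitution $u=\Phi(-z)$. Hence $\Pro(\isrec_n=1)\geq 1/n$ for every $n\geq 2$, and $\expect(N)\geq 1+\sum_{n\geq 2}1/n=\infty$, proving the first claim.

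The only non-routine ingredient is the comparison step: one must check that Slepian's inequality applies (both matrices are genuine correlation matrices with equal unit diagonals and a pointwise domination of the off-diagonal entries) and recognize that the equicorrelated-$\tfrac12$ orthant probability collapses, via the one-factor representation, to the harmonic term $1/n$. Everything else is the summation of a geometric and a harmonic series. It is worth noting that the boundary value $\gamma_{i,j;n}=\tfrac12$ is exactly the correlation produced by an uncorrelated sequence (cf.\ Remark \ref{ex: independence rec}), so the first case contains the classical i.i.d.\ behaviour $\Pro(\isrec_n=1)=1/n$ as its extremal instance.
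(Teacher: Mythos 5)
Your proof is correct and follows the same skeleton as the paper's: the Tonelli series $\expect(N)=1+\sum_{n\geq 2}\Phi_{n-1}(\bzero;\bfGamma_{1:n-1,1:n-1})$, the benchmark value $1/n$ attained at equicorrelation $\tfrac12$, and the geometric sum $1+\sum_{n\geq2}2^{-(n-1)}=2$ in the identity case. The one place you genuinely add something is the comparison step: the paper obtains $1/n$ from Remark \ref{ex: independence rec} (the case $\rho_{i,j}=\rho_{i,n}=\rho_{j,n}=0$, where $\bfGamma=\bfI_{n-1}+\bfone_{n-1}\bfone_{n-1}^\top$ has correlations $\tfrac12$) and then simply asserts ``from this it follows'' that $\gamma_{i,j;n}\geq\tfrac12$ pointwise forces $\Phi_{n-1}(\bzero;\bfGamma_{1:n-1,1:n-1})\geq 1/n$, without naming the tool that makes this monotonicity valid. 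You supply it explicitly: Slepian's inequality applied to two correlation matrices with equal unit diagonals and pointwise-dominated off-diagonal entries, together with the one-factor representation $X_i=(Z+\varepsilon_i)/\sqrt2$ that collapses the equicorrelated-$\tfrac12$ orthant probability to $\int_0^1 u^{n-1}\,\diff u=1/n$. So your write-up is, if anything, more complete than the paper's on the crucial step; the closing observation that $\gamma_{i,j;n}=\tfrac12$ is exactly the uncorrelated-sequence value correctly identifies the i.i.d.\ behaviour as the extremal instance of the first case.
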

\begin{proof}
First, note that
\[
\begin{split}
\expect(N)&=\expect\left(\sum_{n=1}^\infty \isrec_n\right)=\sum_{n=1}^\infty \expect(\isrec_n)\\
&=1+\sum_{n=2}^\infty \Pro(X_{n}>M_{n-1})\\
&=1+\sum_{n=2}^\infty \Phi_{n-1}(\bzero;\bfGamma_{1:n-1,1:n-1}).
\end{split}
\]
The entries of the correlation matrix $\bar{\bfGamma}_{1:n-1,1:n-1}$ in \eqref{eq:par_corr_Gamma} are
 $\gamma_{i,j;n}=1/2$, $1\leq i\neq j<n$, if and only if  $\rho_{i,j}=\rho_{i,n}=\rho_{j,n}=0$.
%
%
%
In this case by Remark \ref{ex: independence rec} we have that $\Phi_{n-1}(\bzero;\bfI_{n-1}+\bfone_{n-1}\bfone_{n-1}^\top)=1/n$.
From this it follows that when $1/2\leq \gamma_{i,j;n}\leq 1$ or 
%
%
\begin{equation}\label{eq:corr_inequaliteis}
\sqrt{(1-\rho_{i,n})(1-\rho_{j,n})}\leq 1+\rho_{i,j} -\rho_{i,n}-\rho_{j,n} \leq 2\sqrt{(1-\rho_{i,n})(1-\rho_{j,n})},
\end{equation}
then $\Phi_{n-1}(\bzero;\bfGamma_{1:n-1,1:n-1})\geq1/n$ and as a consequence
%
%
\[
\expect(N)\geq\sum_{n=1}^{\infty}\frac{1}{n}=\infty.
\]
For every $1\leq i\neq j < n$, provided that $\rho_{i,n}+\rho_{j,n}\geq 0$,
when $\rho_{i,j}=\rho_{i,n}+\rho_{j,n}-1$ then we have  $\bar{\bfGamma}_{1:n-1,1:n-1}=\bfI_{n-1}$.
Therefore in this case $\Phi_{n-1}(\bzero;\bfGamma_{1:n-1,1:n-1})=\Phi_{n-1}(\bfI_{n-1})=2^{-n+1}$.
%
As a consequence
$$
\expect(N)=1+\sum_{n=2}^\infty2^{-n+1}=2\sum_{n=0}^\infty 2^{-n}-2 = 2.
$$
\end{proof}
From Proposition \eqref{pro:numb_rec} it follows that the expected number of records depends on the type of correlation structure of the Gaussian process. For example, an infinite number of records is expected when all variables are uncorrelated or when $X_i$ and $X_j$ are more correlated than the sum of the correlations between $X_i$ and $X_n$, and $X_j$ and $X_n$, for every $1\leq i\neq j < n$.
The second assertion follows from
the left-hand side of the inequality in \eqref{eq:corr_inequaliteis} by noting that
$0\leq\sqrt{(1-\rho_{i,n})(1-\rho_{j,n})}\leq1$.
This suggests
looking at $1+\rho_{i,j} -\rho_{i,n}-\rho_{j,n}\geq 1$ which holds as soon as $\rho_{i,j}\geq\rho_{i,n}+\rho_{j,n}$.
Instead, loosely speaking when $X_i$ and $X_j$ are less correlated than the sum of the correlations between $X_i$ and $X_n$, and $X_j$ and $X_n$, for every $1\leq i\neq j < n$, the expected number of records can be finite. This assertion follows from the condition $\rho_{i,j}=\rho_{i,n}+\rho_{j,n}-1$, provided that $\rho_{i,n}+\rho_{j,n}\geq 0$, which leads that two records should be expected.

In our next result we compute the distribution of the interarrival time between the second and third record.
%
%
\begin{prop}
The distribution of the increment has the representation
\begin{equation}\label{eq: second increment}
\begin{split}
&\Pro(X_{T(3)}-X_{T(2)}\leq x)\\
&= \sum_{j=2}^\infty\sum_{k=j+1}^{\infty}\Phi_{k-3}(\bfzero;\bfGamma_{\calI^\complement, \calI^\complement})
\left\{\Psi_{2,k-3}(\bfzero;\bfD\bar{\bfSigma}_{\calI,\calI}\bfD^\top,\bfDelta,\bar{\bfSigma}_{\calI^\complement,\calI^\complement;\calI})\right.\\
&\hspace*{3cm}\left.-\Psi_{2,k-3}((0,-x);\bfD\bar{\bfSigma}_{\calI,\calI} \bfD^\top,\bfDelta,\bar{\bfSigma}_{\calI^\complement,\calI^\complement;\calI})\right\},
\end{split}
\end{equation}
where the sets of indices $\calI=\{1,j,k\}$ and $\calI^\complement=\{2,\ldots,j-1,j+1,\ldots,k-1\}$ vary with $j$ and $k$, $\bfDelta$ and $\tilde{\bfvarrho}_{\calI^\complement,\calI^\complement}$ are similarly defined as in formula \eqref{eq:Dmat} and \eqref{eq:rhomat} and where
\[
\bfD:=
\begin{pmatrix}
1 & -1 &0\\
0 & 1 &-1
\end{pmatrix}
\]
\end{prop}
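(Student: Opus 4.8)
The plan is to mirror the derivation of Lemma~\ref{lemma: joint_times}, decomposing according to the realized arrival times $T(2)=j$, $T(3)=k$, and then inserting the extra increment constraint as a difference of two closed skew-normal cdfs. First I would condition on the arrival times,
\[
\Pro(X_{T(3)}-X_{T(2)}\le x)=\sum_{j=2}^\infty\sum_{k=j+1}^\infty \Pro\!\left(X_k-X_j\le x,\,T(2)=j,\,T(3)=k\right).
\]
Since the first record is $X_1$, the event $\{T(2)=j,\,T(3)=k\}$ equals
\[
\{X_i<X_1,\ 2\le i\le j-1\}\cap\{X_j>X_1\}\cap\{X_i<X_j,\ j+1\le i\le k-1\}\cap\{X_k>X_j\},
\]
so, using that $X_k>X_j$ holds on this event, each summand is $\Pro(0<X_k-X_j\le x,\ X_1<X_j,\ X_i<X_1\,(2\le i\le j-1),\ X_i<X_j\,(j+1\le i\le k-1))$.

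Next, with $\calI=\{1,j,k\}$ and $\calI^\complement=\{2,\dots,j-1,j+1,\dots,k-1\}$ (so $|\calI^\complement|=k-3$), I would condition the non-record vector $\bfX_{\calI^\complement}$ on the record values $\bfX_{\calI}=\bfz=(z_1,z_j,z_k)^\top$. The upper bounds on the non-records—those in $\{2,\dots,j-1\}$ below $X_1$ and those in $\{j+1,\dots,k-1\}$ below $X_j$—are collected into a matrix $\bfB$ of the form \eqref{eq:Bmat}, now with the two blocks $\bfone_{j-2}$ and $\bfone_{k-1-j}$. Applying the conditional Gaussian law \eqref{eq:cond_gauss} and standardizing $\bfX_{\calI^\complement}$ turns the non-record constraints into $\Phi_{k-3}(\bfvarrho_{\calI^\complement,\calI^\complement}\bfz;\bar{\bfSigma}_{\calI^\complement,\calI^\complement;\calI})$, with $\bfvarrho_{\calI^\complement,\calI^\complement}$ as in \eqref{eq:rhomat}.

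The summand then becomes the integral of $\phi_3(\bfz;\bar{\bfSigma}_{\calI,\calI})\,\Phi_{k-3}(\bfvarrho_{\calI^\complement,\calI^\complement}\bfz;\bar{\bfSigma}_{\calI^\complement,\calI^\complement;\calI})$ over $\{z_1<z_j,\ 0<z_k-z_j\le x\}$. Recognizing this product as a closed skew-normal density \eqref{eq: CSN density} times the constant $\Phi_{k-3}(\bfzero;\bfGamma_{\calI^\complement,\calI^\complement})$, with $\bfGamma_{\calI^\complement,\calI^\complement}$ from \eqref{eq:Gmat}, I factor out that constant so that the remaining integral is a CSN probability of a vector $\bfZ\sim CSN_{3,k-3}$. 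The affine map \eqref{eq: affine CSN} with $\bfD$ as stated sends $\bfZ$ to $\bfD\bfZ=(X_1-X_j,\,X_j-X_k)^\top\sim CSN_{2,k-3}(\bfD\bar{\bfSigma}_{\calI,\calI}\bfD^\top,\bfDelta,\bar{\bfSigma}_{\calI^\complement,\calI^\complement;\calI})$, $\bfDelta$ as in \eqref{eq:Dmat}. The region $\{z_1<z_j,\ 0<z_k-z_j\le x\}$ is precisely $\{(\bfD\bfZ)_1\le 0,\ -x\le(\bfD\bfZ)_2\le 0\}$, whose probability equals $\Psi_{2,k-3}(\bfzero;\cdot)-\Psi_{2,k-3}((0,-x);\cdot)$; summing over $j$ and $k$ yields \eqref{eq: second increment}.

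The main point requiring care is the correct translation of the increment constraint. One must verify that $\{X_k-X_j\le x\}$ enters as the second-coordinate threshold $-x$ of $\bfD\bfZ=(\,\cdot\,,\,X_j-X_k)$ measured in the original scale—that is, through the covariance $\bfD\bar{\bfSigma}_{\calI,\calI}\bfD^\top$ of the record differences—rather than through a rescaled threshold as in the standardization used in Theorem~\ref{teo:time_2nd_rec}; and that the difference of the two $\Psi$-values reproduces exactly the joint event $\{X_1\le X_j,\ 0\le X_k-X_j\le x\}$, i.e. both the ordering $X_1<X_j<X_k$ of the three records and the bound on the increment. Once this identification is checked, every remaining step is the componentwise bookkeeping already carried out in Lemma~\ref{lemma: joint_times}.
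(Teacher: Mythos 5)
Your proposal is correct and follows essentially the same route as the paper's proof: decompose over the arrival times $(T(2),T(3))=(j,k)$, condition $\bfX_{\calI^\complement}$ on the record triple $\bfX_{\calI}$ with $\calI=\{1,j,k\}$, factor out $\Phi_{k-3}(\bfzero;\bfGamma_{\calI^\complement,\calI^\complement})$ to recognize a $CSN_{3,k-3}$ law, and apply the affine map \eqref{eq: affine CSN} with the stated $\bfD$ so that the increment constraint becomes the difference $\Psi_{2,k-3}(\bfzero;\cdot)-\Psi_{2,k-3}((0,-x);\cdot)$, exactly as in the paper's reduction via Lemma \ref{lemma: joint_times}. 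Your closing check of the threshold $-x$ (unscaled, since the unit-variance records make $\bfD\bar{\bfSigma}_{\calI,\calI}\bfD^\top$ carry the scale of the differences) is also the right verification and is consistent with the paper's identity $\Pro(Z_1<Z_j<Z_k)-\Pro(Z_1<Z_j<Z_k-x)$.
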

\begin{proof}
By the total probability rule
\[
\Pro(X_{T(3)}-X_{T(2)}\leq x)= \sum_{j=2}^\infty\sum_{k=j+1}^{\infty}\Pro(X_k-X_j\leq x, T(3)=k,T(2)=j)
\]
Note that, by repeating the same arguments as the previous proofs
\[
\begin{split}
&\Pro(X_k-X_j\leq x, T(3)=k,T(2)=j)\\
&=\Pro(X_k-X_j\leq x, M_{2:j-1}<X_1,X_1<X_j,M_{j+1:k-1}<X_j,X_j<X_k)\\
&=\int_{-\infty}^{+\infty}\int_{z_k-x}^{z_k}\int_{-\infty}^{z_j}\Pro(M_{2:j-1}<z_1,M_{j+1:k-1}<z_j)\phi(z_1,z_j,z_k)\diff z_1\diff z_j\diff z_k\\
&=\int_{-\infty}^{+\infty}\int_{z_k-x}^{z_k}\int_{-\infty}^{z_j}\Phi_{k-3}(\tilde{\bfvarrho}_{\calI^\complement,\calI^\complement} \bfz;\bar{\bfSigma}_{\calI^\complement,\calI^\complement;\calI})\phi_3(\bfz;\bar{\bfSigma}_{\calI,\calI})\diff\bfz\\
&=\Phi_{k-3}(\bfzero;\bfGamma_{\calI^\complement, \calI^\complement})\Pro(Z_1<Z_j<Z_k)-\Phi_{k-3}(\bfzero;\bfGamma_{\calI^\complement, \calI^\complement})\Pro(Z_1<Z_j<Z_k-x),
\end{split}
\]
and thus, the assertion follows by repeating the arguments in the proof of Lemma \ref{lemma: joint_times}.
\end{proof}

In the following result we derive the probability that two records occur at prescribed indices, with no further record in between, together with the distribution of such consecutive records.

\begin{theorem}\label{theo:joint_cons_rec_Gauss}
Let $\{X_n, n\geq 1\}$ be a SSG sequence of rvs.
For every $n\geq 2$ and $j<n$,
let $\calI=\{j,n\}$, $\calI^\complement=\{1,\ldots,j-1,j+1,\ldots,n-1\}$.
The probability that two consecutive records $X_j$ and $X_n$ occur, is
\begin{equation}\label{eq:joint_cons_rec_Gauss}
\Pro\left(\isrec_j=1, \isrec_n=1,\cap_{i=j+1}^{n-1}\isrec_i=0\right)=
\Phi_{n-2}(\bzero;\bfGamma_{1:n-1\mins j,1:n-1\mins j})-
\Phi_{n-1}(\bzero;\bfGamma_{1:n\mins j,1:n\mins j}),
\\
\end{equation}
where $\bfGamma_{1:n-1\mins j,1:n-1\mins j}$ and $\bfGamma_{1:n\mins j,1:n\mins j}$ are similarly defined as in \eqref{eq:std_cov}.
%
%
The joint distribution of $(X_j,X_n)$, given that they are consecutive records, is
\[
\Pro\left(X_j\leq x_1, X_n \leq x_2 | \isrec_j=1, \isrec_n=1,\cap_{i=j+1}^{n-1}\isrec_i=0\right)=
\begin{cases}
P(x_1,x_2), & x_1\leq x_2\\
P(x_1,x_1) & x_1> x_2
\end{cases}
\]
where
\[
\begin{split}
P(a,b)&=
w_{n-1}(b\bfmu;\tilde{\bfGamma}_{1:n\mins j,1:n\mins j})\Psi_{1,n-1}
\left(a;\tilde{\bfvarrho}_{1:n\mins j},-b\bfmu,\bar{\bfSigma}_{1:n\mins j,1:n\mins j;j}\right) \\
&-
w_{n-1}(\bzero;\bfGamma_{1:n\mins j,1:n\mins j})\Psi_{1,n-1}\left(a;\bfvarrho_{1:n\mins j},\bzero,\bar{\bfSigma}_{1:n\mins j,1:n\mins j;j}\right)
%
%
%
%
%
\end{split}
\]
and where $\bfvarrho_{1:n\mins j}$ is similarly defined as in \eqref{eq:std_cov_part1},
$\tilde{\bfGamma}_{1:n\mins j,1:n\mins j}=\bar{\bfSigma}_{1:n\mins j,1:n\mins j;j}+
\tilde{\bfvarrho}_{1:n\mins j}\tilde{\bfvarrho}_{1:n\mins j}^{\top}$ with
$$
\tilde{\bfvarrho}_{1:n\mins j}=\left(\bfvarrho_{1:n\mins j}^{\top},-\frac{\rho_{n,j}}{\sqrt{1-\rho_{n,j}^2}}\right)^{\top},
$$
\begin{equation*}\label{eq:param}
\bfmu=\left(0,\dots,0, {(1-\rho_{j,n}^2)}^{-1/2}\right)^{\top}\in\R^{n-1}.
\end{equation*}
and for any $\bfx\in\R^{n-1}$ and positive-definite matrix $\bfSigma\in \R^{n-1,n-1}$,
$$
w_{n-1}(\bfx;\bfSigma)=\frac{\Phi_{n-1}(\bfx;\bfSigma)}{\Phi_{n-2}(\bzero;\bfGamma_{1:n-1\mins j, 1:n-1\mins j})-
\Phi_{n-1}(\bzero;\bfGamma_{1:n\mins j,1:n\mins j})}.
$$
\end{theorem}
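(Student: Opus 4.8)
The plan is to reduce both claims to the condition-and-standardize device of Proposition~\ref{pro:rec_Gauss}, so that every quantity is expressed through orthant probabilities $\Phi_{n-1}(\bzero;\cdot)$ and closed skew-normal cdfs $\Psi_{1,n-1}$. For the probability \eqref{eq:joint_cons_rec_Gauss}, I would first rewrite the event as order relations among the $X_i$. Because $\isrec_j=1$ gives $X_j=M_j$, the absence of records on $\{j+1,\dots,n-1\}$ forces $M_{n-1}=X_j$, and $\isrec_n=1$ then gives $X_n>M_{n-1}=X_j$, the event $\{\isrec_j=1,\isrec_n=1,\cap_{i=j+1}^{n-1}\isrec_i=0\}$ coincides with $\{X_i<X_j\ \forall i\in\calI^\complement\}\cap\{X_n>X_j\}$; that is, $X_j$ is the strict maximum of $X_1,\dots,X_{n-1}$ and $X_n$ overtakes it. I would then write $\{X_n>X_j\}$ through its complement, obtaining $\Pro(X_i<X_j\ \forall i\in\calI^\complement)-\Pro(X_i<X_j\ \forall i\in\calI^\complement\cup\{n\})$. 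Each term is a ``which index attains the maximum'' probability of exactly the kind handled in Proposition~\ref{pro:rec_Gauss}, now conditioning on $X_j=z$ in place of $X_n=z$: conditioning, standardizing the remaining coordinates to the correlation $\bar{\bfSigma}_{1:n\mins j,1:n\mins j;j}$, and applying Lemma~7.1 of \citeN{azzalini1996multivariate} turns the two terms into $\Phi_{n-2}(\bzero;\bfGamma_{1:n-1\mins j,1:n-1\mins j})$ and $\Phi_{n-1}(\bzero;\bfGamma_{1:n\mins j,1:n\mins j})$, which is \eqref{eq:joint_cons_rec_Gauss}.

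For the joint distribution I would write the conditional cdf as a ratio, the denominator being \eqref{eq:joint_cons_rec_Gauss}, i.e.\ the quantity $D$ inside $w_{n-1}$. Treating first $x_1\le x_2$, I condition on $X_j=z$ and integrate $z$ up to $x_1$, leaving $\Pro(X_i<z\ \forall i\in\calI^\complement,\ z<X_n\le x_2\mid X_j=z)$. The key manoeuvre is to split the two-sided band on $X_n$ by subtraction, $\{z<X_n\le x_2\}=\{X_n\le x_2\}\setminus\{X_n\le z\}$. After standardizing $\bfX_{1:n\mins j}\mid X_j=z$, the lower piece $\{X_i<z,\ X_n\le z\}$ becomes $\Phi_{n-1}(z\bfvarrho_{1:n\mins j};\bar{\bfSigma}_{1:n\mins j,1:n\mins j;j})$ exactly as before, while the upper piece $\{X_i<z,\ X_n\le x_2\}$ becomes $\Phi_{n-1}(z\tilde{\bfvarrho}_{1:n\mins j}+x_2\bfmu;\bar{\bfSigma}_{1:n\mins j,1:n\mins j;j})$: the standardized bound from $X_n\le x_2$ is the affine function of $z$ with slope $-\rho_{n,j}/\sqrt{1-\rho_{n,j}^2}$ (the last entry of $\tilde{\bfvarrho}_{1:n\mins j}$) and intercept $x_2\,(1-\rho_{n,j}^2)^{-1/2}$ (the last entry of $x_2\bfmu$), whereas all entries indexed by $\calI^\complement$ are unchanged. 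Integrating each piece over $z\le x_1$ and invoking the identity
\[
\int_{-\infty}^{x_1}\phi(z)\,\Phi_{n-1}(\bfDelta z+\bfc;\bar{\bfSigma})\,\d z=\Phi_{n-1}(\bfc;\bfGamma)\,\Psi_{1,n-1}(x_1;\bfDelta,-\bfc,\bar{\bfSigma}),\qquad \bfGamma=\bar{\bfSigma}+\bfDelta\bfDelta^\top,
\]
which is just \eqref{eq: CSN density}--\eqref{eq: CSN cdf} read with location parameter $-\bfc$, yields the two summands of $P(x_1,x_2)$; dividing by $D$ turns the prefactors $\Phi_{n-1}(x_2\bfmu;\tilde{\bfGamma}_{1:n\mins j,1:n\mins j})$ and $\Phi_{n-1}(\bzero;\bfGamma_{1:n\mins j,1:n\mins j})$ into $w_{n-1}(x_2\bfmu;\tilde{\bfGamma}_{1:n\mins j,1:n\mins j})$ and $w_{n-1}(\bzero;\bfGamma_{1:n\mins j,1:n\mins j})$.

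Finally, the case $x_1>x_2$ follows from the support constraint $X_j<X_n$: for $z>x_2$ the band $\{z<X_n\le x_2\}$ is empty, so the integrand vanishes there and the $z$-integral over $z\le x_1$ collapses to the integral over $z\le x_2$, i.e.\ to the value of $P$ on the diagonal. The step I expect to be the main obstacle is the parameter bookkeeping in the $x_1\le x_2$ case: verifying that $X_n\le x_2$ standardizes to precisely the claimed affine bound $z\tilde{\bfvarrho}_{1:n\mins j}+x_2\bfmu$, and that matching it to the non-centered cdf $\Psi_{1,n-1}(\,\cdot\,;\tilde{\bfvarrho}_{1:n\mins j},-x_2\bfmu,\bar{\bfSigma}_{1:n\mins j,1:n\mins j;j})$ produces the correct normalizer $\Phi_{n-1}(x_2\bfmu;\tilde{\bfGamma}_{1:n\mins j,1:n\mins j})=\Phi_{n-1}(\bzero;-x_2\bfmu,\tilde{\bfGamma}_{1:n\mins j,1:n\mins j})$. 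Once the location parameter $\bfmu$ is tracked correctly through \eqref{eq: CSN density}, the remaining manipulations are identical to those in Proposition~\ref{pro:rec_Gauss}.
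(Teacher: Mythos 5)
Your proposal is correct and takes essentially the same route as the paper: the same reduction of the consecutive-records event to $\{X_i<X_j\ \forall i\in\calI^\complement\}\cap\{X_n>X_j\}$ split by complementation, and the same condition-on-$X_j=z$, standardize, and integrate-against-$\phi(z)$ device with the CSN identity, your subtraction $\{z<X_n\le x_2\}=\{X_n\le x_2\}\setminus\{X_n\le z\}$ under the integral being exactly the paper's decomposition $A(x_1,x_2)-B(x_1,x_2)$ carried out before conditioning. Note only that your diagonal value $P(x_2,x_2)$ for $x_1>x_2$ agrees with the paper's own proof (which evaluates $A$ and $B$ at $(x_2,x_2)$), so the $P(x_1,x_1)$ appearing in the theorem statement is evidently a typo that your argument silently corrects.
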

\begin{proof}
First we compute the probability that two consecutive records occur. For every $1\leq j<n$ we have
\[
\begin{split}
\Pro(\isrec_j=1, \isrec_n=1,\cap_{i=j+1}^{n-1}\isrec_i=0)&=\Pro(X_j>M_{j-1}, X_n>M_{n-1})\\
&=\Pro(X_i<X_j, \forall\, i\in\bcalI, X_n>X_j)\\
&=\Pro(X_i<X_j, \forall\, i\in\bcalI,)\\
&- \Pro(X_i<X_j, \forall\, i\in\{i,\dots,n\}\setminus{\{j\}}).
\end{split}
\]
Therefore, \eqref{eq:joint_cons_rec_Gauss} follows by similar arguments to those used in Proposition \ref{pro:rec_Gauss}.

The joint distribution of $(X_j,X_n)$ is given by
\[
\begin{split}
&\Pro\left(X_j\leq x_1, X_n \leq x_2 | \isrec_j=1, \isrec_n=1,\cap_{i=j+1}^{n-1}\isrec_i=0\right)\\
&=
\frac{\Pro\left(X_j\leq x_1, X_n\leq x_2, X_j>M_{j-1}, X_n>M_{n-1},\cap_{i=j+1}^{n-1}\isrec_i=0\right)}{\Pro(X_j>M_{j-1}, X_n>M_{n-1})}.
\end{split}
\]
Note that
\[
\begin{split}
&\Pro(X_j\leq x_1, X_n\leq x_2, X_j>M_{j-1}, X_n>M_{n-1},\cap_{i=j+1}^{n-1}\isrec_i=0)\\
&=\Pro(X_j\leq x_1, X_n\leq x_2, X_i<X_j,\forall\, i\in\bcalI, X_j<X_n)\\
&=\Pro(X_j\leq x_1, X_n\leq x_2, X_i<X_j,\forall\, i\in\bcalI)\\
&-\Pro(X_j\leq x_1, X_n\leq x_2, X_i<X_j,i=1,\dots,n, i\neq j)\\
&=A(x_1,x_2)-B(x_1,x_2).
\end{split}
\]
When $x_1\leq x_2$, we obtain from similar arguments as those used in the proof of Proposition \ref{pro:rec_Gauss}
\[
\begin{split}
A(x_1,x_2)&:= \Pro(X_j\leq x_1, X_n\leq x_2, X_i<X_j,\forall\, i\in\bcalI)\\
&=\int_{-\infty}^{x_1}\Pro(X_n\leq x_2, X_i<z,\forall\, i\in\bcalI|X_j=z)\phi(z)\diff z\\
&=\int_{-\infty}^{x_1}\Pro\left(Z_i<\sqrt{\frac{1-\rho_{i,j}}{1+\rho_{i,j}}} z,\forall\, i\in\bcalI, Z_n< -\frac{\rho_{n,j}}{\sqrt{1-\rho_{n,j}^2}} z +\frac{x_2}{\sqrt{1-\rho_{n,j}^2}}\right)\phi(z)\diff z\\
&=\int_{-\infty}^{x_1}\Phi_{n-1}(\tilde{\bfvarrho}_{1:n\mins j} z+x_2\bfmu;
\bar{\bfSigma}_{1:n\mins j,1:n\mins j;j}) \phi(z)\diff z\\
&=\Phi_{n-1}(x_2\bfmu;\tilde{\bfGamma}_{1:n\mins j,1:n\mins j})
\Psi_{1,n-1}\left(x_1;\tilde{\bfvarrho}_{1:n\mins j},-x_2\bfmu,
\bar{\bfSigma}_{1:n\mins j,1:n \mins j;j}\right).
\end{split}
\]
Similarly,
\[
\begin{split}
B(x_1,x_2)&:= \Pro(X_j\leq x_1, X_i<X_j,i=1,\dots,n, i\neq j)\\
&=\int_{-\infty}^{x_1}\Pro(X_i<z,i=1,\dots,n, i\neq j|X_j=z)\phi(z)\diff z\\
&=\Phi_{n-1}(\bfzero;\bfGamma_{1:n\mins j,1:n\mins j})\Psi_{1,n-1}\left(x_1;\bfvarrho_{1:n\mins j},\bzero,\bar{\bfSigma}_{1:n\mins j,1:n\mins j;j}\right).
\end{split}
\]
When $x_1> x_2$, it is sufficient to compute $A(x_1,x_2)$ and $B(x_1,x_2)$ in $(x_2,x_2)$.
\end{proof}

In the next result we drop the assumption that the two records in Theorem \ref{theo:joint_cons_rec_Gauss} are consecutive.

\begin{theorem}\label{theo:joint_rec_Gauss}
Let $\{X_n, n\geq 1\}$ be a SSG sequence of rvs.
For every $n\geq 2$ and $j<n$,
let $\calI=\{j,n\}$ and $\calI^\complement=\{1,\ldots,j-1,j+1,\ldots,n-1\}$.
The probability that  $X_j$ and $X_n$ are records, is
\begin{equation*}
\Pro(\isrec_j=1, \isrec_n=1)=\Phi_{n-1}(\bfzero; \tilde{\bfOmega}).
\end{equation*}
The joint distribution of $(X_j,X_n)$, given that they are records, is
$$
\Pro(X_j\leq x_1, X_n\leq x_2\vert \isrec_j=1, \isrec_n=1)=
\begin{cases}
P(x_1,x_2), & x_1\leq x_2\\
P(x_1,x_1) & x_1> x_2
\end{cases}
$$
where
\begin{equation*}\label{eq:joint_distr_Gauss2}
\begin{split}
P(a,b)&=
\frac{\Phi_{n-2}(\bfzero;\bfGamma_{\bcalI,\bcalI})}{\Phi_{n-1}(\bfzero; \tilde{\bfOmega})}
\Big(\Psi_{2,n-2}(a,b; \bar{\bfSigma}_{\calI,\calI},\bfvarrho_{\bcalI, {\bcalI}},\bar{\bfSigma}_{{\bcalI},{\bcalI};\calI})\Bigg.
-\Psi_{2,n-2}(a,a; \bar{\bfSigma}_{\calI,\calI},\bfvarrho_{\bcalI, {\bcalI}},\bar{\bfSigma}_{{\bcalI},{\bcalI};\calI})\\
&\hspace*{4cm}\Big.+\Psi_{2,n-2}(0,a;\bfD\bar{\bfSigma}_{{\calI},{\calI}}\bfD^\top,
\bfDelta,\bar{\bfSigma}^{**}_{{\bcalI},{\bcalI};\calI})\Big).
\end{split}
\end{equation*}
%
%
\end{theorem}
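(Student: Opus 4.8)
The plan is to mirror the conditioning argument of Proposition~\ref{pro:rec_Gauss} and Lemma~\ref{lemma: joint_times}, now letting the two ``record'' coordinates $\calI=\{j,n\}$ play the role of the conditioning variables. First I would rewrite the event explicitly. Since $\isrec_j=1$ forces $X_i<X_j$ for $i<j$ and $\isrec_n=1$ forces $X_i<X_n$ for $i<n$ together with $X_j<X_n$, and since $X_i<X_j<X_n$ makes the upper constraint on the early coordinates redundant, the joint record event is
\[
\{X_i<X_j,\ i=1,\dots,j-1\}\cap\{X_i<X_n,\ i=j+1,\dots,n-1\}\cap\{X_j<X_n\}.
\]
Conditioning on $\bfX_\calI=\bfz=(z_j,z_n)^\top$ and applying \eqref{eq:cond_gauss}, the coordinates in $\bcalI$ must lie below the threshold vector $\bfB\bfz$, where $\bfB$ pairs $\bfone_{j-1}$ with $z_j$ and $\bfone_{n-1-j}$ with $z_n$ (the two-index analogue of \eqref{eq:Bmat}). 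Standardising $\bfX_{\bcalI}$ exactly as in Lemma~\ref{lemma: joint_times} turns the conditional probability into $\Phi_{n-2}(\bfvarrho_{\bcalI,\bcalI}\bfz;\bar{\bfSigma}_{\bcalI,\bcalI;\calI})$, with $\bfvarrho_{\bcalI,\bcalI}$ defined through $\bfB$ as in \eqref{eq:rhomat}.

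Next I would recognise, via the CSN density \eqref{eq: CSN density}, that $\Phi_{n-2}(\bfvarrho_{\bcalI,\bcalI}\bfz;\cdot)\,\phi_2(\bfz;\bar{\bfSigma}_{\calI,\calI})$ equals $\Phi_{n-2}(\bfzero;\bfGamma_{\bcalI,\bcalI})$ times the $CSN_{2,n-2}(\bar{\bfSigma}_{\calI,\calI},\bfvarrho_{\bcalI,\bcalI},\bar{\bfSigma}_{\bcalI,\bcalI;\calI})$ density of a pair $(Z_j,Z_n)$, the normalising constant being $\bfGamma_{\bcalI,\bcalI}$ of \eqref{eq:Gmat}. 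Hence the record probability equals $\Phi_{n-2}(\bfzero;\bfGamma_{\bcalI,\bcalI})\,\Pro(Z_j<Z_n)$. Writing $Z_j-Z_n=(1,-1)(Z_j,Z_n)^\top$ and invoking the affine stability \eqref{eq: affine CSN}, $Z_j-Z_n$ is a univariate CSN whose cdf at $0$, by \eqref{eq: CSN cdf}, is a ratio with denominator $\Phi_{n-2}(\bfzero;\bfGamma_{\bcalI,\bcalI})$; this cancels the prefactor and leaves $\Phi_{n-1}(\bfzero;\tilde{\bfOmega})$, where $\tilde{\bfOmega}$ is the $(n-1)\times(n-1)$ matrix assembled in \eqref{eq: CSN cdf}.

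For the joint cdf I would repeat the conditioning with the added constraints $X_j\leq x_1$, $X_n\leq x_2$, reducing the numerator to $\Phi_{n-2}(\bfzero;\bfGamma_{\bcalI,\bcalI})\,\Pro(Z_j\leq x_1,Z_n\leq x_2,Z_j<Z_n)$. Assuming $x_1\leq x_2$, I would split the region by the telescoping identity
\[
\Pro(Z_j\leq x_1,Z_n\leq x_2,Z_j<Z_n)=\Pro(Z_j\leq x_1,Z_n\leq x_2)-\Pro(Z_j\leq x_1,Z_n\leq x_1)+\Pro(Z_j<Z_n\leq x_1),
\]
using that $\{Z_n\leq Z_j\leq x_1\}$ already forces $Z_n\leq x_2$, and that $\{Z_j\leq x_1,Z_n\leq x_1\}$ partitions into $\{Z_n\leq Z_j\leq x_1\}$ and $\{Z_j<Z_n\leq x_1\}$. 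The first two terms are directly $\Psi_{2,n-2}(x_1,x_2;\cdot)$ and $\Psi_{2,n-2}(x_1,x_1;\cdot)$; for the third I apply \eqref{eq: affine CSN} with $\bfD=\left(\begin{smallmatrix}1 & -1\\ 0 & 1\end{smallmatrix}\right)$ to send $(Z_j,Z_n)$ to $(Z_j-Z_n,Z_n)$, rewriting $\Pro(Z_j<Z_n\leq x_1)=\Psi_{2,n-2}(0,x_1;\bfD\bar{\bfSigma}_{\calI,\calI}\bfD^\top,\bfDelta,\bar{\bfSigma}^{**}_{\bcalI,\bcalI;\calI})$. Dividing by the record probability $\Phi_{n-1}(\bfzero;\tilde{\bfOmega})$ yields $P(x_1,x_2)$. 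When $x_1>x_2$, the chain $X_j<X_n\leq x_2<x_1$ makes $X_j\leq x_1$ redundant, so both thresholds collapse to the smaller value and the cdf reduces to $P(x_2,x_2)$, matching the $(x_2,x_2)$ evaluation already used for the consecutive case in Theorem~\ref{theo:joint_cons_rec_Gauss}.

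The main obstacle is bookkeeping the CSN parameters through the two affine maps: one must verify that the transform $(Z_j,Z_n)\mapsto(Z_j-Z_n,Z_n)$ produces exactly the skewness matrix $\bfDelta$ and conditional correlation $\bar{\bfSigma}^{**}_{\bcalI,\bcalI;\calI}$ recorded in the statement (through $\bfOmega^{*},\bfDelta^{*},\bfSigma^{*}$ in \eqref{eq: affine CSN}), and that every normalising constant $\Phi_{n-2}(\bfzero;\bfGamma_{\bcalI,\bcalI})$ cancels cleanly, so that the three CSN cdfs share the common prefactor $\Phi_{n-2}(\bfzero;\bfGamma_{\bcalI,\bcalI})/\Phi_{n-1}(\bfzero;\tilde{\bfOmega})$. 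By contrast, the region decomposition and the redundancy argument for $x_1>x_2$ are routine once the inequality directions are matched to the CSN cdf arguments.
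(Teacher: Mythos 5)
Your proposal is correct and follows essentially the same route as the paper's proof: conditioning on $(X_j,X_n)$, standardizing $\bfX_{\bcalI}$ via \eqref{eq:cond_gauss}, recognizing the $CSN_{2,n-2}$ density with normalizing constant $\Phi_{n-2}(\bfzero;\bfGamma_{\bcalI,\bcalI})$ that cancels through the cdf formula \eqref{eq: CSN cdf}, applying the affine closure \eqref{eq: affine CSN} first with $(1,-1)$ and then with $\bfD=\left(\begin{smallmatrix}1&-1\\0&1\end{smallmatrix}\right)$ to get $(Z_j-Z_n,Z_n)$, and using the same three-term decomposition of $\{Z_j\leq x_1,\,Z_n\leq x_2,\,Z_j<Z_n\}$ that the paper carries out at the level of integration regions. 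One remark: your conclusion $P(x_2,x_2)$ for the case $x_1>x_2$ is the mathematically correct value and agrees with the paper's own proof (which evaluates at $(x_2,x_2)$ in the analogous step of Theorem \ref{theo:joint_cons_rec_Gauss}), so the $P(x_1,x_1)$ appearing in the theorem statement is evidently a typo rather than a defect of your argument.
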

\begin{proof}
Similar steps as those used in the proof of Proposition \ref{pro:rec_Gauss} show
that the probability that $X_j$ and $X_j$ are records, is
\[
\begin{split}
&\Pro(\isrec_j=1, \isrec_n=1)=\Pro(X_j>M_{j-1}, X_n>M_{n-1})\\
&=\int_{-\infty}^{+\infty}\int_{-\infty}^{z_2}\Pro\left( M_{j-1}<z_1, M_{j+1}^{n-1}<z_2 \vert X_j=z_1, X_n=z_2\right)\phi_2(z_1,z_2;\bar{\bfSigma}_{\calI,\calI})\diff z_1\diff z_2\\
&=\int_{-\infty}^{+\infty}\int_{-\infty}^{z_2}\Phi_{n-2}\left(\bfvarrho_{\bcalI, {\bcalI}}\bfz;\bar{\bfSigma}_{{\bcalI},{\bcalI};\calI}\right)\phi_2(\bfz;\bar{\bfSigma}_{\calI,\calI})\diff z_1\diff z_2\\
&=\Phi_{n-2}(\bfzero, \bfGamma_{\calI,\bcalI})\int_{-\infty}^{+\infty}\int_{-\infty}^{z_2}\psi_{2,n-2}\left(\bfz;\bar{\bfSigma}_{\calI,\calI},\bfvarrho_{\bcalI, {\bcalI}},\bar{\bfSigma}_{{\bcalI},{\bcalI};\calI}\right)\diff z_1\diff z_2\\
&=\Phi_{n-2}(\bfzero;\bfGamma_{\calI,\bcalI})\Pro(Z_1-Z_2<0),
\end{split}
\]
where $(Z_1,Z_2)\sim CSN_{2,n-2}(\bar{\bfSigma}_{\calI,\calI},\bfvarrho_{\bcalI, {\bcalI}},\bar{\bfSigma}_{{\bcalI},{\bcalI};\calI})$.
Precisely, to obtain the third line we used the formula in \eqref{eq:cond_gauss} and where
\begin{align}
\notag\bfB&=
\begin{pmatrix}
\bfone_{j-1} & \bfzero_{j-1}\\
\bfzero_{n-j-1} & \bfone_{n-j-1}
\end{pmatrix},\\
\notag\bfvarrho_{\bcalI, {\bcalI}}&=\bfsigma_{{\bcalI},{\bcalI};\calI}^{-1}(\bfB-\bfSigma_{{\bcalI}, {\calI}}\bar{\bfSigma}_{{\calI}, {\calI}}^{-1})\\
&=
\label{eq:varrhomat}\begin{pmatrix}
\frac{1}{\sigma_{11}}\left(1-\frac{\rho_{1j}-\rho_{1n}\rho_{jn}}{1-\rho_{jn}^2}\right) & \frac{\rho_{1j}\rho_{jn}-\rho_{1n}}{\sigma_{11}(1-\rho_{jn}^2)}\\
\vdots & \vdots\\
\frac{1}{\sigma_{j-1,j-1}}\left(1-\frac{\rho_{j-1,j}-\rho_{j-1,n}\rho_{jn}}{1-\rho_{jn}^2}\right) & \frac{\rho_{j-1,j}\rho_{jn}-\rho_{j-1,n}}{\sigma_{j-1,j-1}(1-\rho_{jn}^2)}\\
\frac{\rho_{j+1,j}\rho_{jn}-\rho_{j+1,n}}{\sigma_{j+1,j+1}(1-\rho_{jn}^2)} & \frac{1}{\sigma_{j+1,j+1}}\left(1-\frac{\rho_{j+1,j}-\rho_{j+1,n}\rho_{jn}}{1-\rho_{jn}^2}\right)\\
\vdots & \vdots\\
\frac{\rho_{n-1,j}\rho_{jn}-\rho_{n-1,n}}{\sigma_{n-1,n-1}(1-\rho_{jn}^2)} & \frac{1}{\sigma_{n-1,n-1}}\left(1-\frac{\rho_{n-1,j}-\rho_{n-1,n}\rho_{jn}}{1-\rho_{jn}^2}\right)
\end{pmatrix}
\end{align}
and $\bar{\bfSigma}_{{\bcalI},{\bcalI};\calI}$ is a $(n-2)\times(n-2)$ partial correlation matrix with upper diagonal entries
\begin{equation*}\label{eq:par_corr_2}
\rho_{i,k;j,n}=\rho_{ij}-\frac{\rho_{ij}-\rho_{in}\rho_{jn}}{1-\rho_{jn}^2}\rho_{kj}-\frac{\rho_{in}-\rho_{ij}\rho_{jn}}{1-\rho_{jn}^2}\rho_{kn}, \; \forall \,i<k\in \bcalI.
\end{equation*}
and
\[
\sigma_{i,i}=1-\frac{\rho_{ij}-\rho_{in}\rho_{jn}}{1-\rho_{jn}^2}\rho_{ij}-\frac{\rho_{in}-\rho_{ij}\rho_{jn}}{1-\rho_{jn}^2}\rho_{in}.
\]
In the third line we multiply and divide the term within the integrals with $\Phi_{n-2}(\bfzero;\bfGamma_{\bcalI, {\bcalI}})$ where $\bfGamma_{\bcalI, {\bcalI}}$ is defined as
\[
\bfGamma_{\bcalI, {\bcalI}}=\bfvarrho_{\bcalI, {\bcalI}}\bar{\bfSigma}_{\calI,\calI}\bfvarrho_{\bcalI, {\bcalI}}^\top+\bar{\bfSigma}_{{\bcalI},{\bcalI};\calI}.
\]
We therefore recognize a unified multivariate skew-normal pdf within the integrals and the integral
of it can be seen as $\Pro(Z_1<Z_2)$. Now, by \eqref{eq: affine CSN} we obtain
\[
Z_1-Z_2=
\begin{pmatrix}
1 & -1
\end{pmatrix}
\begin{pmatrix}
Z_1\\
Z_2
\end{pmatrix}
\sim CSN_{1,n-2}\left(2(1-\rho_{j,n}),\bfDelta^*,\bfSigma^*_{\bcalI, {\bcalI};\calI}\right)
\]
where
\[
\begin{split}
\bfDelta^*&=\frac{1}{2(1-\rho_{j,n})}\bfvarrho_{\bcalI, {\bcalI}}\bar{\bfSigma}_{\calI,\calI}
\begin{pmatrix}
1 \\ -1
\end{pmatrix}\\
&=\frac{1}{2}\begin{pmatrix}
\mathbf{1}_{j-1} \\ -\mathbf{1}_{n-j-1}
\end{pmatrix}
{\left(\frac{1}{\sigma_{ii}}\left(1+\frac{\rho_{in}-\rho_{ij}}{1-\rho_{jn}}\right)\right)}_{i=1,\dots,n-1,i\neq j}
\end{split}
\]
and
\[
\bfSigma^*_{\bcalI, {\bcalI};\calI}=\bfGamma_{\bcalI, {\bcalI}}-\bfDelta^*
\begin{pmatrix}
1-\rho_{jn} & -1+\rho_{jn}
\end{pmatrix}
\bfvarrho_{\bcalI, {\bcalI}}^\top.
\]
By formula \eqref{eq: CSN cdf} we obtain the result, with
\[
\tilde{\bfOmega}=
\begin{pmatrix}
\bfGamma_{\bcalI, {\bcalI}} & 2(1-\rho_{jn}){\bfDelta^*}^\top\\
2(1-\rho_{jn}){\bfDelta^*} & 2(1-\rho_{jn})
\end{pmatrix}.
\]
By similar steps we can compute the joint distribution of two records $(X_j,X_n)$ for $j<n$.
\[
\Pro(X_j\leq x_1, X_n \leq x_2 | \isrec_j=1, \isrec_n=1)=
\frac{\Pro(X_j\leq x_1, X_n\leq x_2, X_j>M_{j-1}, X_n>M_{n-1})}{\Pro(X_j>M_{j-1}, X_n>M_{n-1})}.
\]
The numerator can be written as
\[
\begin{split}
&\Pro(X_j\leq x_1, X_n\leq x_2,X_j>M_{j-1}, X_n>M_{n-1})\\
&=\int_{-\infty}^{x_2}\int_{-\infty}^{\min(x_1,z_2)}\Pro\left( M_{j-1}<z_1, M_{j+1}^{n-1}<z_2 \vert X_j=z_1, X_n=z_2\right)\phi(\bfz;\bar{\bfSigma}_{{\calI},\calI})\diff z_1\diff z_2\\
&=\int_{-\infty}^{x_2}\int_{-\infty}^{x_1}\Pro\left( M_{j-1}<z_1, M_{j+1}^{n-1}<z_2 \vert X_j=z_1, X_n=z_2\right)\phi_2(\bfz;\bar{\bfSigma}_{{\calI},\calI})\indic(z_2>x_1)\diff z_1\diff z_2\\
&+\int_{-\infty}^{x_2}\int_{-\infty}^{z_2}\Pro\left( M_{j-1}<z_1, M_{j+1}^{n-1}<z_2 \vert X_j=z_1, X_n=z_2\right)\phi_2(\bfz;\bar{\bfSigma}_{{\calI},\calI})\indic(z_2<x_1)\diff z_1\diff z_2\\
&=\int_{x_1}^{x_2}\int_{-\infty}^{x_1}\Pro\left( M_{j-1}<z_1, M_{j+1}^{n-1}<z_2 \vert X_j=z_1, X_n=z_2\right)\phi_2(\bfz;\bar{\bfSigma}_{{\calI},\calI})\diff z_1\diff z_2\\
&+\int_{-\infty}^{x_1}\int_{-\infty}^{z_2}\Pro\left( M_{j-1}<z_1, M_{j+1}^{n-1}<z_2 \vert X_j=z_1, X_n=z_2\right)\phi_2(\bfz;\bar{\bfSigma}_{{\calI},\calI})\diff z_1\diff z_2\\
&=\int_{-\infty}^{x_2}\int_{-\infty}^{x_1}\Phi_{n-2}\left(\bfvarrho_{\bcalI, {\bcalI}}\bfz;\bar{\bfSigma}_{{\bcalI},{\bcalI};\calI}\right)\phi_2(\bfz;\bar{\bfSigma}_{{\calI},\calI})\diff z_1\diff z_2\\
&-\int_{-\infty}^{x_1}\int_{-\infty}^{x_1}\Phi_{n-2}\left(\bfvarrho_{\bcalI, {\bcalI}}\bfz;\bar{\bfSigma}_{{\bcalI},{\bcalI};\calI}\right)\phi_2(\bfz;\bar{\bfSigma}_{{\calI},\calI})\diff z_1\diff z_2\\
&+\int_{-\infty}^{x_1}\int_{-\infty}^{z_2}\Phi_{n-2}\left(\bfvarrho_{\bcalI, {\bcalI}}\bfz;\bar{\bfSigma}_{{\bcalI},{\bcalI};\calI}\right)\phi_2(\bfz;\bar{\bfSigma}_{{\calI},\calI})\diff z_1\diff z_2\\
\end{split}
\]
where $\bfvarrho_{\bcalI, {\bcalI}}$ is as in \eqref{eq:varrhomat}. We multiply and divide each
term within the integrals with $\Phi_{n-2}(\bfzero;\bfGamma_{\bcalI, {\bcalI}})$.
Then, we recognize that the first two integrals provide the distribution of the closed skew-normal random vector we introduced before,
%
%
evaluated at the points $(x_1,x_2)$, $(x_1,x_1)$. Instead, the third integral represents the distribution
of the random vector $(Z_1-Z_2, Z_1)$ which again according to \eqref{eq: affine CSN} follows
a closed skew-normal distribution, i.e.,
\[
\begin{split}
&
\begin{pmatrix}
1 & -1\\
0&1
\end{pmatrix}
\begin{pmatrix}
Z_1\\
Z_2
\end{pmatrix}
=\bfD\begin{pmatrix}
Z_1\\
Z_2
\end{pmatrix}
\sim CSN_{2,n-2}(\bfD\bar{\bfSigma}_{{\calI},{\calI}}\bfD^\top,\bfDelta,\bar{\bfSigma}^{**}_{{\bcalI},{\bcalI};\calI}),
\end{split}
\]
where $\bfDelta=\bfvarrho_{\bcalI, {\bcalI}}\bar{\bfSigma}_{{\bcalI},{\bcalI}}\bfD^\top{(\bfD\bar{\bfSigma}_{{\calI},{\calI}}\bfD^\top)}^{-1}$ and $\bar{\bfSigma}^{**}_{{\bcalI},{\bcalI};\calI}=\bfGamma_{\bcalI, {\bcalI}}-\bfDelta\bfD\bar{\bfSigma}_{{\calI},{\calI}}\bfvarrho_{\bcalI,\bcalI}^\top$.
\end{proof}

It follows from Theorem \ref{theo:joint_rec_Gauss}  that the two events:  a record occuring  at time $j$ and $n$, are not independent. Indeed, the probability
$\Phi_{n-1}(\bfzero; \tilde{\bfOmega})$ is different from the product of the two marginal probabilities
$\Phi_{j-1}(\bzero;\bfGamma_{1:j-1;1:j-1})$ and $\Phi_{n-1}(\bzero;\bfGamma_{1:n-1;1:n-1})$,
derived in Proposition \ref{pro:rec_Gauss}.

\begin{rem}
The marginal distribution of $X_j$, given that $(X_j,X_n)$ are records, is
\[
\begin{split}
&\Pro(X_j\leq x_1\vert \isrec_j=1, \isrec_n=1)\\
&=\frac{\Phi_{n-2}(\bfzero;\bfGamma_{\bcalI,\bcalI})}{\Phi_{n-1}(\bfzero; \tilde{\bfOmega})}
\times\left(\Psi_{1,n-2}(x_1; 1,\bfDelta_1,\bar{\bfOmega}_{{\bcalI},{\bcalI};\calI})\right.
-\Psi_{2,n-2}(x_1,x_1; \bar{\bfSigma}_{\calI,\calI},\bfvarrho_{\bcalI, {\bcalI}},\bar{\bfSigma}_{{\bcalI},{\bcalI};\calI})\\
&\hspace*{4cm}\left.+\Psi_{2,n-2}(0,x_1;\bfD\bar{\bfSigma}_{{\calI},{\calI}}\bfD^\top,
\bfDelta,\bar{\bfSigma}_{{\bcalI},{\bcalI};\calI})\right).
\end{split}
\]
where
\[
\bfDelta_1=
\begin{pmatrix}
{\sigma_{ii}^{-1}(1-\rho_{ij})}_{i=1,\dots,j-1}\\
{\sigma_{ii}^{-1}(\rho_{jn}-\rho_{in})}_{i=j+1,\dots,n-1}
\end{pmatrix},
\]
\[
\bar{\bfOmega}_{{\bcalI},{\bcalI};\calI}=\bar{\bfSigma}_{{\bcalI},{\bcalI};\calI}+
\begin{pmatrix}
{\sigma_{ii}^{-1}(\rho_{ij}\rho_{jn}-\rho_{in})}_{i=1,\dots,j-1}\\
{\sigma_{ii}^{-1}(1-\rho^2_{jn}-\rho_{ij}-\rho_{in}\rho_{jn})}_{i=j+1,\dots,n-1}
\end{pmatrix},
\]
and these parameters are obtained from \eqref{eq: affine CSN}, with $\bfA:=(0\quad 1)$.
See the proof of Theorem \ref{theo:joint_rec_Gauss} for the details.
%
Hence, similarly to the case of independent random variables in \shortciteN{falkkp2018},
the distribution of $X_j$ being a record is affected, if we know
that $X_n$ is a record as well.
The marginal distribution of $X_n$, given that $(X_j,X_n)$ are records, is
\[
\Pro(X_n\leq x_2\vert \isrec_j=1, \isrec_n=1)=
\frac{\Phi_{n-2}(\bfzero;\bfGamma_{\bcalI,\bcalI})}{\Phi_{n-1}(\bfzero; \tilde{\bfOmega})}
\Psi_{2,n-2}(0,x_2;\bfD\bar{\bfSigma}_{{\calI},{\calI}}\bfD^\top,
\bfDelta,\bar{\bfSigma}_{{\bcalI},{\bcalI};\calI})\\
\]
Hence, different to the case of independent random variables in \shortciteN{falkkp2018}
we have that the distribution of $X_n$, being a record, is affected by the additional knowledge that at time $j<n$ there was a record.
\end{rem}

\subsection{Asymptotic results for records of stationary sequences}\label{sec:asymptotic}

Although stationary Gaussian sequences are useful for a wide range of statistical analysis
(e.g., \citeNP{lindgren2012}, \citeNP{brockwell2013}, \citeNP{banerjee2014}, \citeNP{cressie2015}, to name a few), a natural question that arises is the following.
What are the properties of records for a stationary sequence of dependent rvs when the univariate marginal distribution, $F$, is non-Gaussian? This question is even more relevant if it is assumed that $F$ is unknown, which concerns many real-world applications. Some of the previous results are clearly independent of the underlying df $F$, provided it is continuous. The probability that $X_n$ is a record, or the distribution of the arrival time of the $n$-th record, for example, do not depend on $F$. The distribution of $X_n$, conditional to the assumption that it is a record, however does depend on $F$.

Let $\{X_n, n\geq 1\}$ be a strictly stationary sequence of rvs, i.e.
the joint distribution of $(X_{j_1},\ldots,X_{j_n})$ and $(X_{j_1+m},\ldots,X_{j_n+m})$ are
identical, for every $n,m$ and $j_1,\ldots,j_n$.
We provide an answer to the above question  under some restrictions on the tail behavior of the marginal distribution of such a process and on  the dependence structure.
Precisely, we assume that $F$ belongs to the (maximum) domain of attraction of $G_\gamma$,
in symbols $F\in\calD(G_\gamma)$, $\gamma\in\R$. This means that, if $Y_1,\ldots,Y_n$ are iid rv
with common cdf $F$, then there exist sequences of norming constants $a_n>0$ and $b_n\in\R$ such that
\begin{equation}\label{eq:Doa}
\lim_{n\to\infty}\Pro(\max(Y_1,\ldots,Y_n)\leq a_n x+b_n)=\lim_{n\to\infty}F^n(a_n x+b_n)=G_\gamma(x),\quad x,\gamma\in\R.
\end{equation}
This cdf is the Generalized Extreme-Value (GEV) class of distributions.
The cdfs of the three sub-classes of the GEV, i.e. the Gumbel, Fr\'{e}chet, and negative Weibull
are denoted by $G_0(x)$, $G_\alpha(x)=G_{1/\gamma}((x-1)/\gamma)$ for $\gamma>0$ and $G_\beta(x)=G_{-1/\gamma}(-(x+1)/\gamma)$
for $\gamma<0$ (see e.g., Ch. 2 \citeNP{fahure10} for details).

Concerning the dependence structure of $\{X_n,n\geq 1\}$ we assume a mild condition on the long-range dependence of extremes
of such a stationary sequence.
Precisely,
we assume that $\{X_n,n\geq 1\}$ is a stationary sequence with a univariate marginal df $F$ that satisfies
$F\in \calD(G_\gamma)$, $\gamma\in\R$ and according to \shortciteN[Ch. 3]{lealr83} the following dependence restriction is required.
Partition $\{1,\ldots,n\}$ into $k_n=\floor*{n/r_n}$ blocks of length $r_n=o(n)$.
Suppose that for every $\lambda>0$, there is a sequence of real-value thresholds $u_n(\lambda)$, $n=1,2\ldots$, such that
\[
\lim_{n\to\infty}n\Pro(X_1>u_n(\lambda))=\lambda,\quad \lambda>0
\]
and the condition $D(u_n(\lambda))$ is satisfied for each such $\lambda$. Specifically, for every
$\lambda>0$, let
\[
K_n(l)=\max(|\Pro(X_i\leq u_n(\lambda), i\in I\cup J)-\Pro(X_i\leq u_n(\lambda), i\in I)\Pro(X_i\leq u_n(\lambda), i\in J)|)
\]
where $I,J\subset \{1,\ldots,n\}$ such that
$\min\{|i-j|:i\in I,j\in J\}=l$. Then, we say that condition $D(u_n(\lambda))$ holds for each such $\lambda$, if
$K_n(l_n)\to 0$ as $n\to\infty$ for some sequence $l_n\to\infty$ with $l_n=o(n)$
(pp. 53-57, \shortciteNP{lealr83}). By Lemma 3.2.2 in \shortciteN{lealr83} this means that extreme events,
such as the partial maxima $M_{E_i}=\max_{j\in E_i}(X_j)$, with $E_i=\{(i-1)r_n+1,\ldots,ir_n\}\setminus\{i r_n-l_n+1,\ldots,i r_n\}$ , $i=1,\ldots,k_n$, which are separated by $l_n$, are almost independent.

Then, under these conditions
by \shortciteN[Theorem 3.7.1]{lealr83} we have that for suitable choices of
$r_n\to\infty$ with $n\to\infty$ such that
$k_nK_n(l_n)\to 0$ and $k_nl_n\to 0$ as $n\to\infty$, it follows that
\begin{equation}\label{eq:approx}
\lim_{n\to\infty}\Pro(\max(X_1,\ldots,X_n)\leq u_n(\lambda))=\exp(-\theta \lambda),\quad 0<\theta\leq1,\,\lambda>0.
\end{equation}
When this holds true we say that the sequence $\{X_n, n\geq 1\}$ has {\it extremal index} $\theta\in(0,1]$.
The result in \eqref{eq:approx} implies that for $\lambda=-\log G_\gamma(x)$, $x\in\R$, and
suitable norming constants $a_n>0$ and $b_n\in\R$,
$$
\lim_{n\to\infty}\Pro\left(\max(X_1,\ldots,X_n)\leq a_n x +b_n\right)=G_\gamma(x)^\theta,\quad x,\gamma\in\R,
$$

Loosely speaking, the extremal index is a parameter that quantifies the impact that the dependence structure of  the stationary sequence has on the asymptotic distribution of extreme events such as the partial maximum $M_{n}$, for sufficiently large $n$. When $\theta=1$ we recover \eqref{eq:Doa}, i.e. the asymptotic distribution of the normalized maximum for a sequence of independent variables.
When $\theta<1$, then for every $x\in\R$ we have that $G_\gamma(x)\leq G^{\theta}_\gamma(x)$ and
therefore $1-G_\gamma(x)\geq 1-G^{\theta}_\gamma(x)$. In other words,
the dependence of the stationary sequence reduces the size of the extreme events.

\begin{theorem}\label{theo:uni_rec_ext_index}
Let $\{X_n,n\geq 1\}$ be a stationary sequence that has extremal index $0< \theta \leq 1$.
Then,
\begin{equation}\label{eq:rec_ext_index}
n\Pro(\isrec_n=1)\to \theta^{-1},\quad \text{as} \quad n\to\infty.
\end{equation}
Furthermore, there are sequences of norming constants $a_n>0$ and
$b_n\in\R$ such that the asymptotic distribution of $X_n$ (suitably normalized), given that it is a record, is
\begin{equation}\label{eq:rec_cdf_ext_index}
\lim_{n\to\infty}\Pro(X_n\leq a_nx +b_n| \isrec_n=1)=G^\theta_\gamma(x),\quad x,\gamma\in\R,\;0<\theta\leq 1.
\end{equation}
\end{theorem}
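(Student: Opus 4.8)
The plan is to route every statement about $\isrec_n$ through the running maximum $M_n=\max(X_1,\dots,X_n)$, since the extremal index controls exactly the maxima. The basic identity is
\[
\{\isrec_n=1\}=\{X_n>M_{n-1}\}=\{M_n>M_{n-1}\},
\]
so a record at time $n$ is a strict upward jump of the running maximum, and on this event $X_n=M_n$. The latter remark is what makes the conditional law in \eqref{eq:rec_cdf_ext_index} tractable, because $\{X_n\le u,\isrec_n=1\}=\{M_n\le u,\isrec_n=1\}$.

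For \eqref{eq:rec_ext_index} I would fix $\lambda>0$, take the levels $u_n(\lambda)$ with $n\Pro(X_1>u_n(\lambda))\to\lambda$, and partition $\{1,\dots,n\}$ into the $k_n=\floor*{n/r_n}$ blocks of the $D(u_n)$ condition. Writing $M_{n-1}=\max(M^{\mathrm{far}},M^{\mathrm{near}})$, where $M^{\mathrm{far}}$ is the maximum over the blocks not adjacent to $n$ and $M^{\mathrm{near}}$ the maximum over the $r_n-1$ indices immediately preceding $n$, the condition $D(u_n(\lambda))$ makes $M^{\mathrm{far}}$ asymptotically independent of $(X_n,M^{\mathrm{near}})$, with $\Pro(M^{\mathrm{far}}\le u_n(\lambda))\to e^{-\theta\lambda}$ by \eqref{eq:approx}. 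Conditioning on the value of $X_n$ at the extreme level and integrating against its law, which places mass $\d\lambda/n$ near level $u_n(\lambda)$ because $\Pro(X_n>u_n(\lambda))=\lambda/n$, leads to
\[
n\,\Pro(\isrec_n=1)\longrightarrow\int_0^\infty e^{-\theta\lambda}\,q(\lambda)\,\d\lambda,
\]
where $q(\lambda)=\lim_n\Pro(M^{\mathrm{near}}<u_n(\lambda)\mid X_n=u_n(\lambda))$ is the probability that $X_n$ strictly dominates the members of its own cluster that precede it. The target identity \eqref{eq:rec_ext_index} is then equivalent to evaluating this local factor so that the integral equals $\theta^{-1}$.

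Pinning down $q(\lambda)$ is the step I expect to be the main obstacle. The extremal index controls only the \emph{unconditional} cluster behaviour: by stationarity and \eqref{eq:approx} one gets $\Pro(M^{\mathrm{near}}\le u_n(\lambda),X_n>u_n(\lambda))\sim\theta\lambda/n$, hence $\Pro(M^{\mathrm{near}}<u_n(\lambda)\mid X_n>u_n(\lambda))\to\theta$. The record condition, however, conditions on the exact level $X_n=u_n(\lambda)$ and asks only that $X_n$ beat the \emph{preceding} members of its cluster, a feature governed by the ordinary time index $n$ rather than by the thinned count $\theta n$ seen by the running maximum. The care needed is precisely to show that this local contribution, combined with the effective sample size $\theta n$ of $M^{\mathrm{far}}$, produces exactly the factor $\theta^{-1}$; the natural tool is a second application of the $D(u_n)$ and anti-clustering estimates to the window of length $r_n$ ending at $n$.

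For the conditional law \eqref{eq:rec_cdf_ext_index} I would start from
\[
\Pro(X_n\le a_nx+b_n\mid\isrec_n=1)=\frac{\Pro(M_n\le a_nx+b_n,\,\isrec_n=1)}{\Pro(\isrec_n=1)},
\]
using $\{X_n\le u,\isrec_n=1\}=\{M_n\le u,\isrec_n=1\}$. Now $\{M_n\le a_nx+b_n\}$ concerns only the \emph{value} of the maximum, whereas $\{\isrec_n=1\}=\{M_n>M_{n-1}\}$ concerns only its \emph{location}; under $D(u_n)$ these two features of the extremal point process decouple asymptotically, so the joint probability factorises as $\Pro(M_n\le a_nx+b_n)\,\Pro(\isrec_n=1)(1+o(1))$. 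Cancelling $\Pro(\isrec_n=1)$ and invoking $\Pro(M_n\le a_nx+b_n)\to G_\gamma^\theta(x)$ from \eqref{eq:approx} yields \eqref{eq:rec_cdf_ext_index}. The delicate point here is the asymptotic independence of the value and the position of the maximum, which I would establish by the same blocking argument, replacing the last block by an independent copy with an error controlled by $k_nK_n(l_n)\to0$.
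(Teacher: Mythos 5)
You have correctly reconstructed the paper's framework up to the decisive point: like the paper, you represent $n\Pro(\isrec_n=1)$ as an integral of $\Pro(M_{n-1}<v\mid X_n=v)$ against the law of $X_n$, pass to the extreme-value scale $v=u_n(\lambda)$, and use $D(u_n)$ together with \eqref{eq:approx} to turn the contribution of the indices far from $n$ into $e^{-\theta\lambda}$ (the paper's $\exp\{-\theta V(t)\}$ after the substitution $\lambda=V(t)$). But your argument stops exactly where the theorem is decided. You reduce \eqref{eq:rec_ext_index} to the evaluation of the local factor $q(\lambda)=\lim_n\Pro(M^{\mathrm{near}}<u_n(\lambda)\mid X_n=u_n(\lambda))$ and never evaluate it; since $\int_0^\infty e^{-\theta\lambda}\,\d\lambda=\theta^{-1}$, the stated limit forces $q(\lambda)\equiv 1$, and this is precisely what remains unproved. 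Your suggestion that "a second application of the $D(u_n)$ and anti-clustering estimates" will supply the factor cannot work as stated: $D(u_n)$ constrains dependence only between well-separated index sets and says nothing about the joint law of $X_n$ and the $r_n-1$ indices immediately preceding it, while the extremal index determines only the unconditional cluster quantity $\Pro(M^{\mathrm{near}}\le u_n\mid X_n>u_n)\to\theta$ — as you yourself observe. Moreover $q$ is genuinely sensitive to the time-ordering within clusters, which neither $\theta$ nor $D(u_n)$ sees: for a backward moving maximum such as $X_n=\max(\eps_n,\beta\eps_{n-1})$ with heavy-tailed innovations, conditionally on $X_n=u_n(\lambda)$ the cluster peak precedes time $n$ with a nondegenerate probability, so the preceding cluster member exceeds $X_n$ and $q<1$. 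So the reduction is sound, but the proposal is an incomplete proof, and the missing step is not closable by the tools you name.

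For comparison, the paper fills this hole as follows: it first proves the elementary independence statement \eqref{eq:independence}, namely $\Pro(M_{n-1}\le u_n(t)\mid X_n\le u_n(t))=\Pro(M_{n-1}\le u_n(t))+o(1)$, which only uses $\Pro(X_n>u_n(t))=O(1/n)$, and then substitutes this, together with O'Brien's representation $\Pro(M_{n-1}\le u_n(t))\approx\exp\{-n\Pro(X_1>u_n(t))\Pro(M_{r_n}\le u_n(t)\mid X_1>u_n(t))\}$ and $\Pro(M_{r_n}\le u_n(t)\mid X_1>u_n(t))\to\theta$, for the integrand $A_n=\Pro(M_{n-1}<u_n(t)\mid X_n=u_n(t))$; in effect the paper sets your $q(\lambda)$ equal to $1$ by trading conditioning on the exact value $\{X_n=u_n(t)\}$ for conditioning on the event $\{X_n\le u_n(t)\}$. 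If you follow this route, that swap is the step you must make explicit and justify — your own discussion shows you already understand it is where the whole difficulty lives. The same omission propagates to your treatment of \eqref{eq:rec_cdf_ext_index}: the asserted "decoupling of the value and the location of the maximum" is exactly the claim $\Pro(M_n\le a_nx+b_n,\,\isrec_n=1)\sim\Pro(M_n\le a_nx+b_n)\Pro(\isrec_n=1)$, whose only nontrivial content is again the cluster at time $n$, and you offer no more than a sketch for it. The paper avoids a separate factorization lemma altogether: it obtains \eqref{eq:rec_cdf_ext_index} from the same integral representation by truncating the outer integral at $a_nx+b_n$, so that numerator and denominator are handled by one computation. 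Your identity $\{X_n\le u,\isrec_n=1\}=\{M_n\le u,\isrec_n=1\}$ is a nice observation but does not substitute for that computation.
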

\begin{proof}
First, we show that there are on average approximately $\theta^{-1}$ records among
$X_1,\ldots,X_n$, for large $n$. Precisely, \eqref{eq:rec_ext_index} is obtained from
\begin{equation*}
\begin{split}
n\Pro(X_n>M_{n-1})&=n\int_{\text{supp}(F)}\Pro(M_{n-1}<v|X_n=v)f_{X_n}(v)\diff u\\
&=\int\underbrace{\Pro(M_{n-1}<u_n(t)|X_n=u_n(t))}_{A_n}\times \underbrace{na_n f_{X_n}(u_n(t))
\indic(\D)}_{B_n} \diff t
%
\end{split}
\end{equation*}
where we used the change of variable $v=u_n(t)$ with $u_n(t)=a_n t + b_n$ and
$\D:=\{t\in\R:u_n(t)\in\text{supp}(F)\}$.
%
By Theorem 3.7.1 in \shortciteNP{lealr83}, for any $t\in\D$, we have 
\begin{equation*}
\begin{split}
\Pro(M_{n-1}\leq u_n(t),X_n\leq u_n(t))&=\Pro(M_{n-1}\leq u_n(t)|X_n\leq u_n(t))(1-\Pro(X_n>u_n(t)))\\
&=\Pro(M_{n-1}\leq u_n(t)|X_n\leq u_n(t))(1+o(1)),
\end{split}
\end{equation*}
and, on the other hand, we have
\begin{equation*}
\begin{split}
\Pro(M_{n-1}\leq u_n(t))&\geq \Pro(M_{n-1}\leq u_n(t), X_n\leq u_n(t))\\
&\geq\Pro(M_{n-1}\leq u_n(t)) - \Pro(M_{n-1}\leq u_n(t), X_n> u_n(t))\\
&\geq\Pro(M_{n-1}\leq u_n(t)) - \Pro(X_n> u_n(t))\\
&\geq\Pro(M_{n-1}\leq u_n(t)) + o(1).
\end{split}
\end{equation*}
From these two results it follows that for any $t\in\D$ we have
\begin{equation}\label{eq:independence}
\Pro(M_{n-1}\leq u_n(t)|X_n\leq u_n(t))=\Pro(M_{n-1}\leq u_n(t))+ o(1).
\end{equation}
By \eqref{eq:independence} and Theorem 3.7.1 in \shortciteNP{lealr83} (cf. \citeNP{o1987extreme}, \citeNP{rootzen1988maxima}) we have
\begin{equation*}
\begin{split}
A_n&\approx \Pro(M_{n-1}<u_n(t)),\quad n\to\infty\\
&\approx\exp\left\{-n\Pro(X_1>u_n(t))\Pro\left(M_{r_n}\leq u_n(t)|X_1>u_n(t)\right)\right\},\quad \text{as }n\to \infty.
\end{split}
\end{equation*}
Note that
\[
n\Pro(X_1>u_n(t))\stackrel{n\to\infty}{\longrightarrow} V(t)=
\begin{cases}
t^{-\alpha},\, t>0,\,\alpha<0, & \text{if } F\in \calD(G_\alpha),\\
(-t)^{\beta},\, t<0,\, \beta>0, & \text{if } F\in\calD(G_\beta),\\
e^{-t},\,t\in\R, & \text{if } F\in\calD (G_0),
\end{cases}
\]
and
\[
\Pro\left(M_{r_n}\leq u_n(t))|X_1>u_n(t)\right) \stackrel{n\to\infty}{\longrightarrow} \theta.
\]
By \citeN[Ch. 2]{resn87} we have
\[
B_n\stackrel{n\to\infty}{\longrightarrow} g(t)\indic(t\in\text{supp}(G_\gamma))=
\begin{cases}
\alpha t^{-(\alpha+1)},\, t>0,\,\alpha<0, & \text{if } F\in \calD(G_\alpha),\\
\beta (-t)^{\beta-1},\, t<0,\, \beta>0, & \text{if } F\in\calD(G_\beta),\\
e^{-t},\,t\in\R, & \text{if } F\in\calD (G_0).
\end{cases}
\]
Therefore, putting all these results together we obtain, as $n\to\infty$,
\begin{equation*}
\begin{split}
n\Pro(X_n>M_{n-1})&\approx \int_{\text{supp}(G_\gamma)}\exp\left\{-V(t)\theta\right\}g(t)\diff t 
=\theta^{-1},
\end{split}
\end{equation*}
and hence, \eqref{eq:rec_ext_index} is proven.

Finally, using similar arguments we obtain
\begin{equation*}
\begin{split}
\Pro(X_n\leq a_nx +b_n| \isrec_n=1)&=\frac{\Pro(X_n\leq a_nx +b_n, X_n>M_{n-1})}{\Pro(X_n>M_{n-1})}\\
&=\frac{n\int_{v\in\text{supp}(F):v\leq a_nx +b_n}\Pro(M_{n-1}<v|X_n=v)f_{X_n}(v)\diff u}{n\Pro(X_n>M_{n-1})}\\
&\stackrel{n\to\infty}{\longrightarrow}G^{\theta}_{\gamma}(x),\quad x,\gamma\in\R
\end{split}
\end{equation*}
and the proof is complete.
\end{proof}

Theorem \ref{theo:uni_rec_ext_index} states that for a stationary sequence of dependent rvs
$\{X_n,n\geq1\}$, under appropriate conditions on the dependence structure, the asymptotic distribution of $X_n$ (appropriately normalized), being a record, coincides with the asymptotic distribution $G_\gamma^\theta$ of the normalized maximum. This finding generalizes Lemma 2.1 in \citeN{falkkp2018}, derived for a sequence of indepedent rvs. Indeed, the same result is obtained for $\theta=1$.

In the following part the are three specific examples of asymptotic distributions of records that stem from the general formula  \eqref{eq:rec_cdf_ext_index} in Theorem \ref{theo:uni_rec_ext_index}.
\begin{example}[\citeNP{chernick1981limit}]
For an integer $m\ge 2$, let $\{\varepsilon_n, n\geq 1\}$ be a sequence of iid rvs
uniformly distributed on $\{0,1/m,\ldots,(m-1)/m\}$. Let $X_0$ be a rv uniformly distributed on
$[0,1]$, being independent of $\{\varepsilon_n\}$.
The process
\[
X_n=m^{-1}X_{n-1}+\varepsilon_n,\quad n\geq 1,
\]
defines a strictly stationary first-order autoregressive sequence.
For $n=1,2,\ldots$ take the norming constants $a_n>1/n$ and $b_n=1$. Then,
\[
\lim_{n\to\infty}\Pro(X_n\leq 1+ x/n | \isrec_n=1)=e^{\theta\, x},\quad x<0,
\]
where $\theta=(m-1)/m$ with $m\geq 2$.
\end{example}
\begin{example}[\shortciteNP{hsing1996extremes}]
Let $\{X_{n,i}, n\geq 1, i\geq 0\}$ be a triangular array of rvs such that
for every $n$ $\{X_{n,i}, i\geq 0\}$ is a SSG sequence.
Define
$\rho_{n,j}=\expect(X_{n,i}\,X_{n,i+j})$ with $i\leq n$ and $j\geq 1$.
Assume that $(1-\rho_{n,j})\log n\to \delta_j\in(0,\infty]$ for all $j\geq 1$ as $n\to\infty$.
For $n=1,2,\ldots$ choose the norming constants $a_n=(2\log n)^{-1/2}$ and
$$
b_n=a_n^{-1} -a_n(\log\log n + \log 4\pi)/2
$$
Then,
\[
\lim_{n\to\infty}\Pro(X_n\leq a_nx +b_n| \isrec_n=1)=e^{-\theta e^{-x}},\quad x\in\R,
\]
where
\[
\theta=\expect_U\left\{\Phi_{|K|}\left(\sqrt{\delta_k}-\frac{U}{2\sqrt{\delta_k}};\bfSigma\right)\right\}
\]
and where $K=\{k\in A\subset \{1,2\ldots\}:\delta_k< \infty\}$, $U$ is a standard exponential rv and
$\bfSigma$ is a correlation matrix with upper diagonal entries
\begin{equation*}
\frac{\delta_{i}+\delta_{j}-\delta_{|i-j|}}{2\sqrt{\delta_{i}\delta_{j}}}, \quad 1\leq i<j\leq |K|.
\end{equation*}
\end{example}
\begin{example}[\shortciteNP{lealr83} Ch. 3.8]
Let $\varepsilon_1,\varepsilon_2,\ldots$ be iid stable $(1,\alpha,\kappa)$ rvs. We recall that a
rv is stable $(\tau,\alpha,\kappa)$ with $\tau\leq 0$, $0<\alpha\leq 2$ and
$|\kappa|\leq 1$ if its characteristic function is
$$
\omega(x)=\exp\left\{-\tau^\alpha |x|^\alpha\left(1-i\frac{\kappa h(x,\alpha)x}{|x|}\right)\right\}
$$
where $i^2=-1$ and $h(x,\alpha)=\tan(\pi\alpha/2)$ for $\alpha\neq 1$ and $h(x,1)=2\pi^{-1}\log |x|$
otherwise. Let $\{c_i, i\in\mathbb{Z}\}$ be a sequence of constants satisfying
$\sum_{i=-\infty}^\infty |c_i|^{\alpha}<\infty$ and $\sum_{i=-\infty}^\infty c_i\log|c_i|$ is
convergent for $\alpha=1$ and $\kappa\neq 0$. Define the moving average process
\[
X_n=\sum_{i=-\infty}^{\infty} c_i\varepsilon_{n-i},\quad n\geq 1.
\]
%
%
For $n=1,2,\ldots$ choose the  norming constants $a_n=n^{1/\alpha}$ and $b_n=0$.
Then,
\[
\lim_{n\to\infty}\Pro(X_n\leq xn^{1/\alpha}| \isrec_n=1)=e^{-\theta x^{-\alpha}},\quad x>0,
\]
where
\[
\theta=k_\alpha(c_+^{\alpha}(1+\kappa)+c_{-}^{\alpha}(1-\kappa)),
\]
with $c_{\pm}=\max_{-\infty<i<\infty}c_i^{\pm}$, $c^{\pm}=\max(0,\pm c_i)$ and $k_\alpha=\pi^{-1}\Gamma(\alpha)\sin(\alpha\pi/2)$.
\end{example}

What is the expected number of records that will take place
in the case of a stationary sequence of rvs that have extremal index $0< \theta \leq 1$?
We know that
\[
\Pro(X_{n}>M_{n-1})\approx\frac{1}{n\theta},\quad n\to\infty.
\]
Therefore, by elementary arguments,
\[
\expect(N)=\sum_{n=1}^\infty \Pro(X_{n}>M_{n-1})
\stackrel{n\to\infty}{\longrightarrow}\infty.
\]

\section{Records of dependent multivariate Gaussian sequences}\label{sec:multivariate}

Let $\{\bfX_n,n\geq 1\}$ be a sequence of $d$-dimensional random vectrors $\bfX_n=(X_n^{(1)},\ldots,X_n^{(d)})\in\R^d$.
We recall that the rv $\bfX_n$ is a complete record if
$
\bfX_n> \max(\bfX_1,\ldots,\bfX_{n-1})
$
where the maximum is computed componentwise.
Here we consider a second-order stationary multivariate Gaussian process and
we extend some of the results derived in Section \ref{sec:uni_gauss_seq} to the multivariate case.
Precisely, we study the probability that a complete record $\bfX_n$ occurs and the distribution of $\bfX_ n$ (being a record). We also study the probability that
two complete records $(\bfX_j,\bfX_ n)$ occur and the joint distribution of
$(\bfX_j,\bfX_ n)$ (being records).
Without loss of generality, assume for simplicity that
$\expect(\bfX_i)=0$, $\expect(\bfX^2_i)=1$ for every $1\leq i\leq n$.

Let $\bfX=(\bfX_1,\ldots,\bfX_n)$ be
an $nd$-dimensional random vector and consider the partition
$\bfX=(\bfX_{\calI}^{\top},\bfX_{\calI^\complement}^{\top})^{\top}\sim N_{nd}(\bfmu,\bfSigma)$ with corresponding partition of the parameters $\bfmu$ and
$\bfSigma$. The formula of the conditional distribution of $\bfX_{\calI^\complement}$ given that $\bfX_{\calI}=\bfx_{\calI}$, for all $\bfx_{\calI}\in\R^{|{\calI}|}$, in \eqref{eq:cond_gauss} is still valid with the obvious changes. Further on  we will provide the specific details whenever we use such a formula.

%

%
\begin{prop}\label{pro:rec_Gauss}
Let $\{\bfX_n, n\geq 1\}$ be a SSG sequence of random vectors in $\R^d$. For every $n\geq 2$, the probability that $\bfX_n$ is a record and the distribution of $\bfX_n$, given that it is a record,
are equal to
\begin{align}
&\Pro(\iscrec_n=1)=\Phi_{(n-1)d}(\bzero; \bfGamma_{{1:n-1},n})\label{eq: prob_Crec_SG},\\
&\Pro(\bfX_n\leq \bfx | \iscrec_n=1) = \Psi_{d,(n-1)d}\left(\bfx; \bar{\bfSigma}_{n},\bfvarrho_{{1:n-1},n},\bar{\bfSigma}_{{1:n-1},{1:n-1};n}\right),
\end{align}
where
\begin{align}
\nonumber\bfB&=
\begin{pmatrix}
\bfone_{n-1} &\bfzero_{n-1} &\dots&\bfzero_{n-1} \\
\bfzero_{n-1} &\bfone_{n-1} &\dots&\bfzero_{n-1} \\
\vdots & \vdots & &\vdots\\
\bfzero_{n-1} &\bfzero_{n-1} &\dots&\bfone_{n-1} \\
\end{pmatrix}\in\R^{(n-1)d,d}\\
\bfvarrho_{{1:n-1},n}&=\bfsigma_{{1:n-1},{1:n-1};n}^{-1}(\bfB-\bfSigma_{{1:n-1},n}^{(1:d)}\bar{\bfSigma}_{n}^{-1})\in\R^{(n-1)d,d}\label{eq: rho_CR}
\end{align}
where $\bfSigma_{{1:n-1},n}^{(1:d)}$ is the covariance matrix of $(X_1^{(1)},X_2^{(1)},\dots,X_{n-1}^{(1)},\dots,X_1^{(d)},X_2^{(d)},\dots,X_{n-1}^{(d)})$ and $\bfX_n$,
$\bar{\bfSigma_n}$ is the variance-covariance matrix of $\bfX_n$
and
\[
\bfGamma_{{1:n-1},n}=\bar{\bfSigma}_{{1:n-1},{1:n-1};n} + \bfvarrho_{{1:n-1},n}\bar{\bfSigma}_{n}\bfvarrho_{{1:n-1},n}^\top
\]
\end{prop}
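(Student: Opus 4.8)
The plan is to mimic exactly the univariate argument of the first Proposition \ref{pro:rec_Gauss}, replacing each scalar observation $X_i$ by the $d$-vector $\bfX_i$ and each scalar threshold by a componentwise comparison. The key structural fact is that a complete record at time $n$ means $\bfX_n > \max_{1\le i\le n-1}\bfX_i$ componentwise, which is equivalent to requiring $X_i^{(\ell)} < X_n^{(\ell)}$ for every coordinate $\ell\in\{1,\dots,d\}$ and every past index $i\in\{1,\dots,n-1\}$. This is a system of $(n-1)d$ linear inequalities in the $nd$ jointly Gaussian variables, which is precisely the form that makes the CSN machinery of Section \ref{sec:prelim} applicable.

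First I would condition on $\bfX_n = \bfz \in \R^d$ and write the complete-record probability as
\[
\Pro(\iscrec_n=1)=\int_{\R^d}\Pro\bigl(X_i^{(\ell)}<z^{(\ell)},\,\forall i\in\calI^\complement,\,\forall \ell\vert \bfX_n=\bfz\bigr)\,\phi_d(\bfz;\bar{\bfSigma}_n)\,\diff\bfz,
\]
where $\bar{\bfSigma}_n$ is the within-time covariance of $\bfX_n$. Applying the Gaussian conditional formula \eqref{eq:cond_gauss} (in its $nd$-dimensional version) to the stacked vector $\bfX_{\calI^\complement}=(X_1^{(1)},\dots,X_{n-1}^{(1)},\dots,X_1^{(d)},\dots,X_{n-1}^{(d)})^\top$ given $\bfX_n=\bfz$, and then standardizing by $\bfsigma_{1:n-1,1:n-1;n}^{-1}$, the conditional probability becomes $\Phi_{(n-1)d}(\bfvarrho_{1:n-1,n}\,\bfz;\bar{\bfSigma}_{1:n-1,1:n-1;n})$, with $\bfvarrho_{1:n-1,n}$ defined in \eqref{eq: rho_CR}; here the matrix $\bfB$ encodes that each coordinate $\ell$ of $\bfX_n$ bounds exactly the $(n-1)$ past entries of that same coordinate. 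The remaining step is the same integral identity used before: I would invoke Lemma~7.1 of \citeN{azzalini1996multivariate} (the multivariate analogue of the scalar step from the third to fourth line in the proof of the univariate Proposition) to recognize the integral of $\Phi_{(n-1)d}(\bfvarrho_{1:n-1,n}\bfz;\cdot)$ against the $d$-dimensional Gaussian density as $\Phi_{(n-1)d}(\bzero;\bfGamma_{1:n-1,n})$, where $\bfGamma_{1:n-1,n}=\bar{\bfSigma}_{1:n-1,1:n-1;n}+\bfvarrho_{1:n-1,n}\bar{\bfSigma}_n\bfvarrho_{1:n-1,n}^\top$. This yields \eqref{eq: prob_Crec_SG}.

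For the conditional distribution, I would write $\Pro(\bfX_n\le\bfx\mid\iscrec_n=1)$ as the ratio of $\Pro(\bfX_n\le\bfx,\iscrec_n=1)$ to $\Pro(\iscrec_n=1)$, the numerator being the same integral but with the outer $\bfz$-integration restricted to $\bfz\le\bfx$. Multiplying and dividing by $\Phi_{(n-1)d}(\bzero;\bfGamma_{1:n-1,n})$ turns the integrand into a CSN density of the form \eqref{eq: CSN density}, so the truncated integral is exactly the CSN cdf $\Psi_{d,(n-1)d}(\bfx;\bar{\bfSigma}_n,\bfvarrho_{1:n-1,n},\bar{\bfSigma}_{1:n-1,1:n-1;n})$. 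I expect the main obstacle to be purely bookkeeping: verifying that the block structure of $\bfB$, the stacking order of coordinates, and the resulting dimensions of $\bfvarrho_{1:n-1,n}\in\R^{(n-1)d,d}$ and $\bfGamma_{1:n-1,n}\in\R^{(n-1)d,(n-1)d}$ are mutually consistent, and that the standardization $\bfsigma_{1:n-1,1:n-1;n}^{-1}$ is applied to the correct conditional covariance so that $\bar{\bfSigma}_{1:n-1,1:n-1;n}$ is genuinely a correlation matrix. No new analytic idea beyond the univariate case is required; the CSN affine and conditioning identities do all the work once the dimensions are tracked correctly.
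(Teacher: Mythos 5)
Your proposal reproduces the paper's argument essentially step for step: condition on $\bfX_n=\bfz$, apply the stacked $(n-1)d$-dimensional version of \eqref{eq:cond_gauss} with the block matrix $\bfB$, standardize to obtain $\Phi_{(n-1)d}(\bfvarrho_{{1:n-1},n}\bfz;\bar{\bfSigma}_{{1:n-1},{1:n-1};n})$, close the outer integral via the Gaussian-cdf expectation identity (the paper invokes Proposition~7.1 of Azzalini and Capitanio, 1999, the multivariate counterpart of the Lemma~7.1 you cite), and identify the truncated integral in the numerator as the CSN cdf $\Psi_{d,(n-1)d}$. The proof is correct and takes the same route as the paper's.
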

\begin{proof}
We start deriving the probability that $\bfX_n$ is a record.
\[
\begin{split}
\Pro(\bfX_n>\bfM_{n-1})&=\Pro(X_{n,i}>M_{n-1,i},i=1,\dots,d)\\
&=\int_{\R^d}\Pro(M_{n-1,i}<z_i ,i=1,\dots,d| X_{n,i}=z_i,i=1,\dots,d)\phi_d(\bfz)\diff \bfz
\end{split}
\]
Let $\bfX_{1:n-1}^{(i)}$ be the vector of the $i$-th components of $\bfX_1,\dots,\bfX_{n-1}$. Then,
\[
\begin{split}
&\Pro(M_{n-1,i}<z_i ,i=1,\dots,d| X_{n,i}=z_i,i=1,\dots,d)\\
&=\Pro(\cap_{i=1}^d\{X_{k,i}<z_i ,k=1,\dots,n-1\}| X_{n,i}=z_i,i=1,\dots,d)\\
&=\Pro(\bfX_{1:n-1}^{(i)}<\bfone_{n-1}z_i,i=1,\dots,d | \bfX_n=\bfz) .	
\end{split}
\]

By the multivariate version of the conditional Gaussian distribution in \eqref{eq:cond_gauss} we
have
\[
(\bfX_1,\bfX_2,\dots,\bfX_{n-1} | \bfX_n=\bfz_n)\sim N_{(n-1)d}\left(\bfmu_{1:n-1,n},\bfSigma_{{1:n-1,1:n-1};n}\right)
\]
where
\begin{align*}
&\bfmu_{1:n-1,n}={\left(\bfSigma_{{1:n-1},n}^{(i)}\bar{\bfSigma}_{n}^{-1}\right)}_{i=1,\dots,d}\bfz\in\R^{(n-1)d}\\
&\bfSigma_{{1:n-1},{1:n-1};n}={\left(\bfSigma_{{1:n-1},{1:n-1}}^{(i,h)}-\bfSigma_{{1:n-1},n}^{(i)}\bar{\bfSigma}_{n}^{-1}{\bfSigma^{(h)}}_{{1:n-1},n}^\top\right)}_{i,h=1,\dots,d}.
\end{align*}
$\bfSigma_{1:n-1,n}^{(i)}$ is the covariance matrix of $\bfX_{1:n-1}^{(i)}$ and $\bfX_n$, and $\bfSigma_{{1:n-1},{1:n-1}}^{(i,h)}$ is the covariance matrix of $\bfX_{1:n-1}^{(i)}$ and $\bfX_{1:n-1}^{(h)}$.
We have that
\[
\Pro(\bfX_{1:n-1}^{(i)}<\bfone_{n-1}z_i,i=1,\dots,d | \bfX_n=\bfz)=\Phi_{(n-1)d}(\bfvarrho_{{1:n-1},n}\bfz)	
\]
where
$\bfvarrho_{{1:n-1},n}$ is defined as in equation \eqref{eq: rho_CR}.
Therefore
\[
\begin{split}
\Pro(\bfX_n>\bfM_{n-1})&=\int_{\R^d}\Phi_{(n-1)d}(\bfvarrho_{{1:n-1},n}\bfz)\phi_d(\bfz)\diff \bfz\\
&=\expect\left(\Phi_{(n-1)d}(\bfvarrho_{{1:n-1},n}\bfZ)\right),
\end{split}
\]
where $\bfZ\sim N_{(n-1)d}(\bfzero,\bar{\bfSigma}_{{1:n-1},{1:n-1};n})$ and the claim follows by applying Proposition 7.1  in \cite{azzca99}.

The computation of the distribution function follows the same procedure. We need to compute
\[
\begin{split}
\Pro(\bfX_n \leq \bfx,\bfX_n>\bfM_{n-1})&=\int_{(-\mathbb{\infty},\bfx]}\Pro(M_{n-1,i}<z_i ,i=1,\dots,d | X_{n,i}=z_i,i=1,\dots,d)\phi_d(\bfz)\diff \bfz\\
&=\Phi_{(n-1)d}(\bfzero;\bar{\bfSigma}_{{1:n-1};n}+\bfvarrho_{{1:n-1},n}\bar{\bfSigma}_{n}\bfvarrho_{{1:n-1},n}^\top)\\
&\times\Psi_{d,(n-1)d}(\bfx;\bar{\bfSigma}_{n},\bfvarrho_{{1:n-1},n},\bar{\bfSigma}_{{1:n-1};n})
\end{split}
\]
\end{proof}
\begin{rem}
For every $n=1,2\ldots$, $m<n$ and $1\leq i<j\leq d$, if $\cor(X_n^{(i)},X_n^{(j)})=0$
and $\cor(X_n^{(i)},X_m^{(i)})=0$, then we obtain
\[
\Pro\left( \bfX_n>\bfM_{n-1}\right)=\Phi_{(n-1)d}(\bfzero;\bar{\bfSigma}_{{1:n-1},{1:n-1};n}+\bfvarrho_{{1:n-1},n}\bfvarrho_{{1:n-1},n}^\top)
\]
where the variance-covariance matrix is a diagonal block matrix, with each diagonal block being equal to $\bfI_{n-1}+\bfone_{n-1}\bfone_{n-1}^\top$.
Since we have $d$ diagonal blocks, we obtain
\[
\Phi_{(n-1)d}(\bfzero;\bar{\bfSigma}_{{1:n-1},{1:n-1};n}+\bfvarrho_{{1:n-1},n}\bfvarrho_{{1:n-1},n}^\top)={\left(\Phi_{n-1}(\bfzero;\bfI_{n-1}+\bfone_{n-1}\bfone_{n-1}^\top)\right)}^d=n^{-d}
\]
by the result in Remark \ref{ex: independence rec}.
\end{rem}
Our next result deals with the joint distribution of two complete records at times $j$ and $n>j$. In the following, we use the notation $\bfx_{\calJ},\,\calJ\subseteq\{1,\dots,d\}$ to indicate a vector of dimension $\abs{\calJ}$ whose components are the entries of $\bfx\in\R^d$ determined by the elements of $\calJ$.
\begin{theorem}\label{teo: joint_CR_SG}
Let $\{\bfX_n, n\geq 1\}$ be a SSG sequence of random vectors in $\R^d$. For $j$ and $n>j$, set $\calI=\{j,n\}$. Then
\begin{align}
&\Pro(\iscrec_j=1,\iscrec_n=1)=\Phi_{d(n-2)}(\bzero;\bfGamma_{{\bcalI}{\bcalI}})\Psi_{d,d(n-2)}\left(\bfzero;\mathcal{L}_1\right)\label{eq: joint_CR_SG}\\
&\Pro(\bfX_j\leq \bfx_1,\,\bfX_n\leq \bfx_2 | \iscrec_j=1, \iscrec_n=1)\notag\\
& = \frac{\sum_{\calJ\subseteq \{1,\dots,d\}
}\Psi_{2d,d(n-2)}\left(\bfzero_{\calJ},\bfx_{1\bar{\calJ}},\bfx_{1\calJ},\bfx_{2\bar{\calJ}};\mathcal{L}_{\calJ}\right)-\Psi_{2d,d(n-2)}\left(\bfzero_{\calJ},\bfx_{1\bar{\calJ}},\bfx_{1\calJ},\bfx_{1\bar{\calJ}};\mathcal{L}_{\calJ}\right)}{\Psi_{d,d(n-2)}\left(\bfzero;\mathcal{L}_1\right)}\label{eq: joint_distr_CR_SG},
\end{align}
where $\Psi_{m,q}(\cdot;\mathcal{L})\sim\text{CSN}_{m,q}(\mathcal{L})$,
$\bfGamma_{{\bcalI}{\bcalI}}:=\bar{\bfSigma}_{{\bcalI}{\bcalI};{\calI}}+\bfvarrho_{{1:n-1},n}\bar{\bfSigma}_{{\calI}{\calI}}\bfvarrho_{{1:n-1},n}^\top$,
\begin{equation}\label{eq: CSN L}
\begin{split}
\mathcal{L}_{(\cdot)}&=\left(\bfD_{(\cdot)}\bar{\bfSigma}_{{\calI}{\calI}}\bfD_{(\cdot)}^\top,\bfDelta_{(\cdot)},\bar{\bfSigma}_{{\bcalI}{\bcalI};{\calI}}\right),
\end{split}
\end{equation}
$\bfDelta_{(\cdot)}=\bfvarrho_{{1:n-1},n}\bar{\bfSigma}_{{\bcalI},{\bcalI}}\bfD_{(\cdot)}^\top{(\bfD_{(\cdot)}\bar{\bfSigma}_{{\calI},{\calI}}\bfD_{(\cdot)}^\top)}^{-1}$ and
\[
D_1=
\begin{pmatrix}
\bfI_d &-\bfI_d
\end{pmatrix}\qquad
D_{\calJ}=
\begin{pmatrix}
\bfI_{\calJ} & \bfzero_{\bar{\calJ}} &-\bfI_{\calJ} & \bfzero_{\bar{\calJ}}\\
 \bfzero_{\calJ}& \bfI_{\bar{\calJ}}& \bfzero_{\calJ}& \bfzero_{\bar{\calJ}}\\
 \bfzero_{\calJ}& \bfzero_{\bar{\calJ}}& \bfI_{\calJ}& \bfzero_{\bar{\calJ}}\\
 \bfzero_{\calJ}& \bfzero_{\bar{\calJ}}& \bfzero_{\calJ}& \bfI_{\bar{\calJ}}
\end{pmatrix}
\]
\end{theorem}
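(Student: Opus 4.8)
The plan is to mirror the proof of the univariate Theorem~\ref{theo:joint_rec_Gauss}, replacing scalar observations by the $d$-vectors $\bfX_k$ and the single order relation by the componentwise one. The first step is to translate the two complete-record events into a system of componentwise inequalities. Since $\iscrec_j=1$ forces $\bfX_k<\bfX_j$ for $k=1,\dots,j-1$ and $\iscrec_n=1$ forces $\bfX_k<\bfX_n$ for $k=1,\dots,n-1$ (in particular $\bfX_j<\bfX_n$), componentwise transitivity shows that $\bfX_k<\bfX_n$ is automatic for $k<j$. Hence, with $\calI=\{j,n\}$ and $\bcalI=\{1,\dots,j-1,j+1,\dots,n-1\}$, the joint event is equivalent to $\{\bfX_k<\bfX_j,\ k<j;\ \bfX_k<\bfX_n,\ j<k<n;\ \bfX_j<\bfX_n\}$. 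Conditioning on $\bfX_\calI=(\bfz_1^\top,\bfz_2^\top)^\top$ and integrating the $2d$-dimensional Gaussian density of $\bfX_\calI$ over the record region $\{\bfz_1<\bfz_2\}$, the blocked conditional formula \eqref{eq:cond_gauss} turns the inner probability into $\Phi_{d(n-2)}(\bfvarrho_{1:n-1,n}\bfz;\bar{\bfSigma}_{\bcalI\bcalI;\calI})$, where $\bfvarrho_{1:n-1,n}$ is defined as in \eqref{eq: rho_CR} for the partition $\calI=\{j,n\}$ and $\bfB$ sends $(\bfz_1,\bfz_2)$ to the per-block thresholds ($\bfz_1$ for the $k<j$ blocks, $\bfz_2$ for the $j<k<n$ blocks).

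For the probability \eqref{eq: joint_CR_SG} I would factor out $\Phi_{d(n-2)}(\bzero;\bfGamma_{\bcalI\bcalI})$ with $\bfGamma_{\bcalI\bcalI}=\bar{\bfSigma}_{\bcalI\bcalI;\calI}+\bfvarrho_{1:n-1,n}\bar{\bfSigma}_{\calI\calI}\bfvarrho_{1:n-1,n}^\top$, so that the remaining integrand over $\{\bfz_1<\bfz_2\}$ is exactly the CSN density \eqref{eq: CSN density} of a $CSN_{2d,d(n-2)}(\bar{\bfSigma}_{\calI\calI},\bfvarrho_{1:n-1,n},\bar{\bfSigma}_{\bcalI\bcalI;\calI})$ vector $(\bfZ_1,\bfZ_2)$. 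The integral then collapses to $\Pro(\bfZ_1<\bfZ_2)=\Pro(\bfD_1\bfZ<\bzero)$ with $\bfD_1=(\bfI_d\ {-}\bfI_d)$; the affine rule \eqref{eq: affine CSN} gives $\bfD_1\bfZ\sim CSN_{d,d(n-2)}$ with parameter triple $\mathcal{L}_1$ as in \eqref{eq: CSN L}, whence the factor $\Psi_{d,d(n-2)}(\bzero;\mathcal{L}_1)$. Specialising $d=1$ recovers the $\Phi_{n-2}(\bzero;\bfGamma_{\bcalI\bcalI})\,\Pro(Z_1-Z_2<0)$ of Theorem~\ref{theo:joint_rec_Gauss}.

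The joint distribution \eqref{eq: joint_distr_CR_SG} comes from restricting the same integral to $\{\bfz_1\le\bfx_1,\ \bfz_2\le\bfx_2,\ \bfz_1<\bfz_2\}$ and dividing by $\Pro(\iscrec_j=1,\iscrec_n=1)$. The crux is that, coordinatewise, the upper limit for $z_1^{(i)}$ is $\min(x_1^{(i)},z_2^{(i)})$, so I would split the region by which bound is active in each component: let $\calJ$ collect the indices with $z_2^{(i)}\le x_1^{(i)}$ (record bound active) and $\bar{\calJ}$ those with $z_2^{(i)}>x_1^{(i)}$ (the $\bfx_1$-box active). This produces the $2^d$-fold sum over $\calJ\subseteq\{1,\dots,d\}$. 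On $\calJ$ one integrates $z_1$ up to $z_2$, which after the transformation $\bfD_\calJ$ yields the differenced coordinates $\bfZ_{1,\calJ}-\bfZ_{2,\calJ}$ evaluated at $\bzero_\calJ$ together with $\bfZ_{2,\calJ}$ at $\bfx_{1\calJ}$; on $\bar{\calJ}$ the variable $\bfZ_{2,\bar{\calJ}}$ ranges over $(\bfx_{1\bar{\calJ}},\bfx_{2\bar{\calJ}}]$, expressed as the difference of two CSN cdfs (the two $\Psi$ terms, with last block $\bfx_{2\bar{\calJ}}$ and $\bfx_{1\bar{\calJ}}$). Recognising the CSN density once more and invoking \eqref{eq: affine CSN} with $\bfD_\calJ$ supplies the parameters $\mathcal{L}_\calJ$ and $\bfDelta_\calJ$ of the statement.

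The main obstacle is the combinatorial bookkeeping of this last step: unlike the univariate split $\{z_2>x_1\}$ versus $\{z_2<x_1\}$, which yields only the three terms of Theorem~\ref{theo:joint_rec_Gauss}, here the branching is carried out simultaneously in all $d$ coordinates, and one must verify that the affine image $\bfD_\calJ\bfZ$ produces a single consistent covariance triple $\mathcal{L}_\calJ$ for every $\calJ$. I would also flag the implicit ordering $\bfx_1\le\bfx_2$: exactly as the univariate statement separates $x_1\le x_2$ from $x_1>x_2$ through $P(x_1,x_1)$, the general cdf is obtained by replacing $\bfx_2$ by $\bfx_1\wedge\bfx_2$ coordinatewise, so that on $\calJ$ the automatic bound $\bfz_{2,\calJ}\le\bfx_{1\calJ}\le\bfx_{2\calJ}$ indeed enforces $\bfz_2\le\bfx_2$. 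The verification of \eqref{eq:cond_gauss} in its $nd$-dimensional blocked form and the CSN recognition are otherwise routine repetitions of the earlier arguments.
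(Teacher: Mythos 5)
Your proposal is correct and follows essentially the same route as the paper's proof: the transitivity reduction to $\{\bfM_{j-1}<\bfX_j,\ \bfM_{j+1:n-1}<\bfX_n,\ \bfX_j<\bfX_n\}$, conditioning on $\bfX_{\calI}$ via the blocked form of \eqref{eq:cond_gauss}, factoring out $\Phi_{d(n-2)}(\bzero;\bfGamma_{\bcalI\bcalI})$ to recognise a $CSN_{2d,d(n-2)}$ density, the affine rule \eqref{eq: affine CSN} with $\bfD_1$, and the $2^d$-fold split over $\calJ$ according to whether the record bound $z_2^{(i)}\le x_1^{(i)}$ or the box bound is active, with $\bfD_{\calJ}$ handling the differenced coordinates. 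Your additional remark on replacing $\bfx_2$ by $\bfx_1\wedge\bfx_2$ coordinatewise is a sensible clarification that the paper's multivariate statement leaves implicit, but it does not change the argument.
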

\begin{proof}
We compute
\[
\begin{split}
&\Pro(\iscrec_j=1,\iscrec_n=1)\\
&=\Pro(\bfX_j>\bfM_{j-1},\bfX_n>\bfM_{n-1})\\
&=\Pro(\bfX_j>\bfM_{j-1},\bfX_n>\bfM_{j+1:n-1},\bfX_j<\bfX_n)\\
&=\int_{\R^d}\int_{(-\bfinfty,\bfz_n]}\Pro(\bfM_{j-1}<\bfz_j,\bfM_{j+1:n-1}<\bfz_n | \bfX_i=\bfz_i,i\in\calI)\phi_{2d}(\bfz_j,\bfz_n;\bar{\bfSigma}_{{\calI}{\calI}})\diff \bfz_j\diff \bfz_n\\
\end{split}
\]
First of all, we recall the inverse blok-matrix of a two-by-two block matrix:
\[
\bar{\bfSigma}_{{\calI}{\calI}}^{-1}=
\begin{pmatrix}
\bfLambda_1 & -\bar{\bfSigma}_{j}^{-1}\bfSigma_{j,n}\bfLambda_2\\
-\bfLambda_2\bfSigma_{n,j}\bar{\bfSigma}_{j}^{-1} & \bfLambda_2
\end{pmatrix}
\]
where $\bfLambda_i$ is the Schur complement of $\bar{\bfSigma}_{i}$ in $\bar{\bfSigma}_{{\calI}{\calI}}$, for $i\in\calI$, for example, $\bfLambda_1={\left(\bar{\bfSigma}_{j}-\bfSigma_{j,n}\bar{\bfSigma}_{n}^{-1}\bfSigma_{n,j}\right)}^{-1}$.
By the multivariate version of the conditional Gaussian distribution in \eqref{eq:cond_gauss} we
have that
$(\bfX_i,i\in\bcalI)|(\bfX_i=\bfz_i,i\in\calI)\sim N_{\abs{\bcalI}}\left(\bfmu_{{\bcalI}{\bcalI};{\calI}},\bfSigma_{{\bcalI}{\bcalI};{\calI}}\right)$. Specifically we have
$\bfmu_{{\bcalI}{\bcalI};{\calI}}={\left({(\bfmu_i^\top,i=1,\dots,d)},
{(\bfmu_h^\top,h=d+1,\dots,2d)}\right)}^\top$ with
\begin{equation}
\begin{split}
\bfmu_i&=\left(\bfSigma_{{1:j-1},j}^{(i)}\bfLambda_1-\bfSigma_{{1:j-1},n}^{(i)}\bfLambda_2\bfSigma_{n,j}\bar{\bfSigma}_{j}^{-1}\right)\bfz_j+\left(-\bfSigma_{{1:j-1},j}^{(i)}\bar{\bfSigma}_{j}^{-1}\bfSigma_{j,n}\bfLambda_2+\bfSigma_{{1:j-1},n}^{(i)}\bfLambda_2\right)\bfz_n\\
&=:\bfmu_{ij}\bfz_j+\bfmu_{in}\bfz_n,
\end{split}
\end{equation}
\begin{equation}
\begin{split}
\bfmu_h&=\left(\bfSigma_{{j+1:n-1},j}^{(h-d)}\bfLambda_1-\bfSigma_{{j+1:n-1},n}^{(h-d)}\bfLambda_2\bfSigma_{n,j}\bar{\bfSigma}_{j}^{-1}\right)\bfz_j
+\left(-\bfSigma_{{j+1:n-1},j}^{(h-d)}\bar{\bfSigma}_{j}^{-1}\bfSigma_{j,n}\bfLambda_2+\bfSigma_{{j+1:n-1},n}^{(h-d)}\bfLambda_2\right)\bfz_n\\
&=:\bfmu_{hj}\bfz_j+\bfmu_{hn}\bfz_n,
\end{split}
\end{equation}
and $\bfSigma_{{\bcalI}{\bcalI};{\calI}}$ is a $(2d)\times(2d)$ matrix. It is defined by blocks ${(\bfSigma_{{\bcalI}{\bcalI};{\calI}})}_{i,h=1,\dots,2d}$ of the form
\[
\begin{split}
&{(\bfSigma_{{\bcalI}{\bcalI};{\calI}})}_{ih}\\
&=\bfSigma_{{1:j-1},{1:j-1}}^{(i,h)}-{\left(\left(\bfSigma_{{1:j-1},j}^{(i)}\bfLambda_1-\bfSigma_{{1:j-1},n}^{(i)}\bfLambda_2\bfSigma_{n,j}\bar{\bfSigma}_{j}^{-1}\right)\bfSigma_{{1:j-1},j}^{(h)}\right.} \\
&{\left. \quad+
\left(-\bfSigma_{{1:j-1},j}^{(i)}\bar{\bfSigma}_{j}^{-1}\bfSigma_{j,n}\bfLambda_2+\bfSigma_{{1:j-1},n}^{(i)}\bfLambda_2\right)\bfSigma_{{1:j-1},n}^{(h)}\right)}_{ih}\qquad i,h=1,\dots,d
\end{split}
\]
\[
\begin{split}
&{(\bfSigma_{{\bcalI}{\bcalI};{\calI}})}_{ih}\\
&=\bfSigma_{{1:j-1},{j+1:n-1}}^{(i,h-d)}-{\left(\left(\bfSigma_{{1:j-1},j}^{(i)}\bfLambda_1-\bfSigma_{{1:j-1},n}^{(i)}\bfLambda_2\bfSigma_{n,j}\bar{\bfSigma}_{j}^{-1}\right)\bfSigma_{{j+1:n-1},j}^{(h-d)}\right.} \\
&{\left. \quad+
\left(-\bfSigma_{{1:j-1},j}^{(i)}\bar{\bfSigma}_{j}^{-1}\bfSigma_{j,n}\bfLambda_2+\bfSigma_{{1:j-1},n}^{(i)}\bfLambda_2\right)
\bfSigma_{{j+1:n-1},n}^{(h-d)}\right)}_{ih}\qquad i=1,\dots,d,\, h=d+1,\dots,2d
\end{split}
\]
\[
\begin{split}
&{(\bfSigma_{{\bcalI}{\bcalI};{\calI}})}_{ih}\\
&=\bfSigma_{{j+1:n-1},{1:j-1}}^{(i-d,h)}-{\left(\left(\bfSigma_{{j+1:n-1},j}^{(i-d)}\bfLambda_1-\bfSigma_{{j+1:n-1},n}^{(i-d)}\bfLambda_2\bfSigma_{n,j}\bar{\bfSigma}_{j}^{-1}\right)\bfSigma_{{1:j-1},j}^{(h)}\right.} \\
&{\left. \quad+
\left(-\bfSigma_{{j+1:n-1},j}^{(i-d)}\bar{\bfSigma}_{j}^{-1}\bfSigma_{j,n}\bfLambda_2+\bfSigma_{{j+1:n-1},j}^{(i-d)}\bfLambda_2\right)\bfSigma_{{1:j-1},n}^{(h)}\right)}_{ih}\qquad i=d+1,\dots,2d,\,h=1,\dots,d
\end{split}
\]
\[
\begin{split}
&{(\bfSigma_{{\bcalI}{\bcalI};{\calI}})}_{ih}\\
&=\bfSigma_{{j+1:n-1},{j+1:n-1}}^{(i-d,h-d)}-{\left(\left(\bfSigma_{{j+1:n-1},j}^{(i-d)}\bfLambda_1-\bfSigma_{{j+1:n-1},n}^{(i-d)}\bfLambda_2\bfSigma_{n,j}\bar{\bfSigma}_{j}^{-1}\right)\bfSigma_{{j+1:n-1},j}^{(h-d)}\right.} \\
&{\left. \quad+
\left(-\bfSigma_{{j+1:n-1},j}^{(i-d)}\bar{\bfSigma}_{j}^{-1}\bfSigma_{j,n}\bfLambda_2+\bfSigma_{{j+1:n-1},j}^{(i-d)}\bfLambda_2\right)\bfSigma_{{j+1:n-1},n}^{(h-d)}\right)}_{ih}\qquad i,h=d+1,\dots,2d
\end{split}
\]
Therefore, we obtain
\[
\Pro(\bfX_{1:j-1}^{(i)}<\bfone_{j-1}\bfz_j,\bfX_{j+1:n-1}^{(i)}<\bfone_{n-j-1}\bfz_n,i=1,\dots,d |\bfX_i=\bfz_i,i\in\calI)=\Pro(\bfZ<\bfvarrho_{\bcalI,\bcalI}\bfz)
\]
where $\bfZ\sim N_{(n-2)d}(\bfzero,\bar{\bfSigma}_{{\bcalI}{\bcalI};{\calI}})$,
\begin{equation}\label{eq: coeff_block_B}
\bfvarrho_{\bcalI,\bcalI}=\bfsigma_{\bcalI,\bcalI;\calI}^{-1}(\bfB-\bfSigma_{\bcalI,\calI}^{(1:d)}\bar{\bfSigma}_{\calI\calI}^{-1})\in\R^{(n-2)d,d}
\end{equation}
\[
\bfB=
\begin{pmatrix}
\bfone_{j-1} & \bfzero_{j-1} & \dots & \bfzero_{j-1} &\bfzero_{j-1} &\bfzero_{j-1} &\dots&\bfzero_{j-1} \\
\bfzero_{j-1} & \bfone_{j-1} & \dots & \bfzero_{j-1} &\bfzero_{j-1} &\bfzero_{j-1} &\dots&\bfzero_{j-1} \\
\vdots & \vdots & &\vdots &\vdots &\vdots & &\vdots\\
\bfzero_{j-1} & \bfzero_{j-1} & \dots & \bfone_{j-1} &\bfzero_{j-1} &\bfzero_{j-1} &\dots&\bfzero_{j-1} \\
\bfzero_{n-j-1} & \bfzero_{n-j-1} & \dots & \bfzero_{n-j-1} &\bfone_{n-j-1} &\bfzero_{n-j-1} &\dots&\bfzero_{n-j-1} \\
\bfzero_{n-j-1} & \bfzero_{n-j-1} & \dots & \bfzero_{n-j-1} &\bfzero_{n-j-1} &\bfone_{n-j-1} &\dots&\bfzero_{n-j-1} \\
\vdots & \vdots & &\vdots &\vdots &\vdots & &\vdots\\
\bfzero_{n-j-1} & \bfzero_{n-j-1} & \dots & \bfzero_{n-j-1} &\bfzero_{n-j-1} &\bfzero_{n-j-1} &\dots&\bfone_{n-j-1} \\
\end{pmatrix}
\]
and $\bfz=(\bfz_j,\bfz_n)$ is a column vector with length $2d$.
We obtain
\[
\begin{split}
&\Pro(\iscrec_j=1,\iscrec_n=1)\\
&=\int_{\R^d}\int_{(-\bfinfty,\bfz_n]}\Phi_{d(n-2)}(\bfvarrho_{\bcalI,\bcalI}\bfz;\bar{\bfSigma}_{{\bcalI}{\bcalI};{\calI}})\phi_{2d}(\bfz_j,\bfz_n;\bar{\bfSigma}_{{\calI}{\calI}})\diff \bfz_j\diff \bfz_n\\
&=\Phi_{d(n-2)}(\bzero; \bar{\bfSigma}_{{\bcalI}{\bcalI};{\calI}}+\bfvarrho_{\bcalI,\bcalI}\bar{\bfSigma}_{{\calI}{\calI}}\bfvarrho_{\bcalI,\bcalI}^\top)\Pro(\bfZ_1<\bfZ_2)
\end{split}
\]
where $(\bfZ_1,\bfZ_2)\sim \text{CSN}_{2d,d(n-2)}(\bar{\bfSigma}_{{\calI}{\calI}},\bfvarrho_{\bcalI,\bcalI},\bar{\bfSigma}_{{\bcalI}{\bcalI};{\calI}})$. Formula \eqref{eq: joint_CR_SG} follows by noting that
\[
\bfZ_1-\bfZ_2=
\begin{pmatrix}
\bfI_d &-\bfI_d
\end{pmatrix}
\begin{pmatrix}
\bfZ_1\\
\bfZ_2
\end{pmatrix}
\]
and by applying \eqref{eq: affine CSN}

To compute formula \eqref{eq: joint_distr_CR_SG}, we repeat the same procedure. With $\mathcal{D}(\bfa_1,\dots,\bfa_n):=\prod (-\infty,\bfa_i]$ we obtain
\[
\begin{split}
&\Pro(\bfX_j\leq \bfx_1,\,\bfX_n\leq \bfx_2 , \iscrec_j=1, \iscrec_n=1)\\
&=\Pro(\bfX_j\leq \bfx_1,\,\bfX_n\leq \bfx_2 ,\bfX_j>\bfM_{j-1},\bfX_n>\bfM_{j+1:n-1}\bfX_i,\bfX_j<\bfX_n)\\
&=\int_{(-\bfinfty,\bfx_2]}\sum_{\calJ\subseteq\{1,\dots,d\}}\left(\int_{\mathcal{D}(\bfz_{n\calJ},\bfx_{1\bar{\calJ}})}\hspace*{-1cm}\Pro(\bfM_{j-1}<\bfz_j,\bfM_{j+1:n-1}<\bfz_n | \bfX_i=\bfz_i,i\in\calI)\phi_{2d}(\bfz_j,\bfz_n;\bar{\bfSigma}_{{\calI}{\calI}})\diff \bfz_j\right)\\
&\hspace*{1cm}\indic(\bfz_{n\calJ}<\bfx_{1\calJ},\bfz_{n\bar{\calJ}}>\bfx_{1\bar{\calJ}})\diff \bfz_n\\
&=\sum_{\calJ\subseteq\{1,\dots,d\}}\int_{(-\bfinfty,\bfx_{1\calJ}]}\int_{(\bfx_{1\calJ},\bfx_{2\bar{\calJ}}]}\int_{\mathcal{D}(\bfz_{n\calJ},\bfx_{1\bar{\calJ}})}\Phi_{d(n-2)}(\bfvarrho_{\bcalI,\bcalI}\bfz;\bar{\bfSigma}_{{\bcalI}{\bcalI};{\calI}})\phi_{2d}(\bfz_j,\bfz_n;\bar{\bfSigma}_{{\calI}{\calI}})\diff \bfz_j\diff \bfz_n\\
&=\sum_{\calJ\subseteq\{1,\dots,d\}}\left(\int_{\mathcal{D}(\bfx_{1\calJ},\bfx_{2\bar{\calJ}},\bfz_{n\calJ},\bfx_{1\bar{\calJ}})}\Phi_{d(n-2)}(\bfvarrho_{\bcalI,\bcalI}\bfz;\bar{\bfSigma}_{{\bcalI}{\bcalI};{\calI}})\phi_{2d}(\bfz_j,\bfz_n;\bar{\bfSigma}_{{\calI}{\calI}})\diff \bfz_j\diff \bfz_n\right.\\
&\hspace*{0.5cm}-\left.\int_{\mathcal{D}(\bfx_{1\calJ},\bfx_{2\bar{\calJ}},\bfz_{n\calJ},\bfx_{1\bar{\calJ}})}\Phi_{d(n-2)}(\bfvarrho_{\bcalI,\bcalI}\bfz;\bar{\bfSigma}_{{\bcalI}{\bcalI};{\calI}})\phi_{2d}(\bfz_j,\bfz_n;\bar{\bfSigma}_{{\calI}{\calI}})\diff \bfz_j\diff \bfz_n\right)\\
&=\sum_{\calJ\subseteq \{1,\dots,d\}
}\Phi_{d(n-2)}(\bzero; \bar{\bfSigma}_{{\bcalI}{\bcalI};{\calI}}+\bfvarrho_{\bcalI,\bcalI}\bar{\bfSigma}_{{\calI}{\calI}}\bfvarrho_{\bcalI,\bcalI}^\top)\\
&\hspace{2cm}\left(\Pro(\bfZ_{1\calJ}<\bfZ_{2\calJ},\bfZ_{1\bar{\calJ}}\leq \bfx_{1\bar{\calJ}},\bfZ_{2\calJ}\leq \bfx_{1\bar{\calJ}},\bfZ_{2\bar{\calJ}}\leq \bfx_{2\bar{\calJ}})\right.\\
&\hspace{2cm}-\left.\Pro(\bfZ_{1\calJ}<\bfZ_{2\calJ},\bfZ_{1\bar{\calJ}}\leq \bfx_{1\bar{\calJ}},\bfZ_{2\calJ}\leq \bfx_{1\bar{\calJ}},\bfZ_{2\bar{\calJ}}\leq \bfx_{1\bar{\calJ}})\right)
\end{split}
\]
The first probability on the right-hand side can be computed by noting that
\[
\begin{pmatrix}
\bfZ_{1\calJ}-\bfZ_{1\calJ}\\
\bfZ_{1\bar{\calJ}}\\
\bfZ_{2\calJ}\\
\bfZ_{2\bar{\calJ}}
\end{pmatrix}=
\begin{pmatrix}
\bfI_{\calJ} & \bfzero_{\bar{\calJ}} &-\bfI_{\calJ} & \bfzero_{\bar{\calJ}}\\
 \bfzero_{\calJ}& \bfI_{\bar{\calJ}}& \bfzero_{\calJ}& \bfzero_{\bar{\calJ}}\\
 \bfzero_{\calJ}& \bfzero_{\bar{\calJ}}& \bfI_{\calJ}& \bfzero_{\bar{\calJ}}\\
 \bfzero_{\calJ}& \bfzero_{\bar{\calJ}}& \bfzero_{\calJ}& \bfI_{\bar{\calJ}}
\end{pmatrix}
\begin{pmatrix}
\bfZ_{1\calJ}\\
\bfZ_{1\bar{\calJ}}\\
\bfZ_{2\calJ}\\
\bfZ_{2\bar{\calJ}}
\end{pmatrix}
\]
where $(\bfZ_1,\bfZ_2)=(\bfZ_{1\calJ},\bfZ_{1\bar{\calJ}},\bfZ_{2\calJ},\bfZ_{2\bar{\calJ}})\sim \text{CSN}_{2d,d(n-2)}(\bar{\bfSigma}_{{\calI}{\calI}},\bfvarrho_{\bcalI,\bcalI},\bar{\bfSigma}_{{\bcalI}{\bcalI};{\calI}})$ and by applying \eqref{eq: affine CSN}.
The second probability is computed in the same way.
\end{proof}

\bibliographystyle{chicago}
\bibliography{evt}
\end{document}